\documentclass[twoside,leqno,10pt]{amsart}
\usepackage{amsfonts}
\usepackage{amsmath}
\usepackage{amscd}
\usepackage{amssymb}
\usepackage{amsthm}
\usepackage{latexsym}
\usepackage{color}
\setlength{\textwidth}{18.2cm}
\setlength{\oddsidemargin}{-0.7cm}
\setlength{\evensidemargin}{-0.7cm}
\setlength{\topmargin}{-0.7cm}
\setlength{\headheight}{0cm}
\setlength{\headsep}{0.5cm}
\setlength{\topskip}{0cm}
\setlength{\textheight}{23.9cm}
\setlength{\footskip}{.5cm}
%\everymath={\displaystyle}

\newtheorem{theorem}{Theorem}
\newtheorem{Proposition}{Proposition}

\newtheorem{corollary}{Corollary}

\numberwithin{equation}{section}

\title{Asymptotic behavior of small solutions of quadratic congruences in three variables modulo prime powers}
\author{Stephan Baier \and Anup Haldar} 

\address{Stephan Baier,
Ramakrishna Mission Vivekananda Educational and Research Institute, Department of Mathematics, G. T. Road, PO Belur Math, Howrah, West Bengal 711202, India}
\email{stephanbaier2017@gmail.com}
\address{Anup Haldar,
Ramakrishna Mission Vivekananda Educational and Research Institute, Department of Mathematics, G. T. Road, PO Belur Math, Howrah, West Bengal 711202, India}
\email{anuphaldar1996@gmail.com}
\subjclass[2020]{11L40,11T23,11K36} 
\keywords{quadratic congruences, Poisson summation, evaluation of complete exponential sums, parametrization of points, Diophantine equations}

\begin{document}
\begin{abstract}
Let $p>5$ be a fixed prime and assume that $\alpha_1,\alpha_2,\alpha_3$ are coprime to $p$. We study the asymptotic behavior of small solutions of congruences of the form $\alpha_1x_1^2+\alpha_2x_2^2+\alpha_3x_3^2\equiv 0\bmod{q}$ with $q=p^n$, where $\max\{|x_1|,|x_2|,|x_3|\}\le N$ and $(x_1x_2x_3,p)=1$. (In fact, we consider a smoothed version of this problem.) If $\alpha_1,\alpha_2,\alpha_3$ are fixed and $n\rightarrow \infty$, we establish an asymptotic formula (and thereby the existence of such solutions) under the condition $N\gg q^{1/2+\varepsilon}$. If these coefficients are allowed to vary with $n$, we show that this formula holds if $N\gg q^{11/18+\varepsilon}$. The latter should be compared with a result by Heath-Brown who established the  existence of non-zero solutions under the condition $N \gg q^{5/8+\varepsilon}$ for odd square-free moduli $q$. 
\end{abstract}
\maketitle
\tableofcontents

\section{Introduction and main results} 
Recently, the authors published a short article \cite{arx} titled ``Pythagorean triples modulo prime powers'' on the arXiv preprint server. In this article, we studied small solutions of quadratic congruences of the form 
$$
x_1^2+x_2^2-x_3^2\equiv 0\bmod{q},
$$ 
where $q=p^n$ is a power of a fixed prime $p$ and $n\rightarrow \infty$.
In the present paper, we investigate, more generally,
small solutions of quadratic congruences of the form
\begin{equation} \label{pythpn}
\alpha_1x_1^2+\alpha_2x_2^2+\alpha_3x_3^2\equiv 0\bmod{q}
\end{equation}
with prime power moduli $q$.
First, we give a brief review of some history of this problem. 

Let $Q(x_1,...,x_n)$ be a quadratic form with integer coefficients. The question of detecting small solutions of congruences of the form
$$
Q(x_1,...,x_k)\equiv 0 \bmod{q}
$$
has received a lot of attention (see, in particular, \cite{Hea1}, \cite{Hea2} and \cite{Hea3}). Here we focus on the case $k=3$. In this case, a result by Schinzel, Schlickewei and Schmidt \cite{SSS} for general moduli $q$ implies that there is a non-zero solution $(x_1,x_2,x_3)\in \mathbb{Z}^3$ such that $\max\{|x_1|,|x_2|,|x_3|\}=O(q^{2/3})$, where the $O$-constant is absolute. The exponent $2/3$ was improved to $5/8+\varepsilon$ by Heath-Brown \cite{Hea3} for forms with $(\det Q,q)=1$ and $q$ odd and square-free. 
A result by Cochrane \cite{Coc} for general moduli $q$ implies that for any {\it fixed} form $Q(x)$, there is a non-zero solution with $\max\{|x_1|,|x_2|,|x_3|\}=O(q^{1/2})$, where the $O$-constant may depend on the form. Of particular interest is the case when $q=p^n$ is a prime power. This was considered by Hakimi in \cite{Hak}, with emphasis on quadratic forms with a large number $k$ of variables. 

In the present paper, we study the {\it asymptotic behavior} of small solutions of diagonal quadratic congruences \eqref{pythpn}
with $\max\{|x_1|,|x_2|,|x_3|\}\le N$ and $(x_1x_2x_3,q)=1$ if $q=p^n$ is a power of a fixed odd prime $p$. The condition $(x_1x_2x_3,q)=1$ automatically excludes the trivial solution $(0,0,0)$. We also assume that $(\alpha_1\alpha_2\alpha_3,q)=1$. For convenience, we consider a smoothed version of this problem (i.e., the solutions are suitably weighted). We study this problem both for fixed and arbitrary coefficients $\alpha_i$. In the case of fixed coefficients $\alpha_i$ and $n\rightarrow\infty$, we obtain an asymptotic formula if $N\gg q^{1/2+\varepsilon}$, and in the case of coefficients which are allowed to vary with $n$, we obtain such a formula for $N\gg q^{11/18+\varepsilon}$. This should be compared with Heath-Brown's above-mentioned result from \cite{Hea3}. He focused on the ``orthogonal'' situation when $q$ is an odd square-free number and obtained the slightly weaker exponent $5/8=0.625$ in place of $11/18=0.6\overline{1}$, only addressing the {\it existence} of non-zero solutions. As pointed out by Heath-Brown in \cite{Hea3}, the existence of {\it non-zero} solutions $\ll q^{\theta}$ for {\it all} odd moduli follows if one has established it for all {\it square-free} odd moduli, using the following simple observation: If $q=q_0^2q_1$ with $q_1$ square-free and $Q(x_1,x_2,x_3)\equiv 0 \bmod{q_1}$, then $Q(q_0x_1,q_0x_2,q_0x_3)\equiv 0 \bmod{q}$. For powers $q=p^n$ of a fixed odd prime $p$, this argument even gives the existence of {\it non-zero} solutions $\ll_{\varepsilon} q^{1/2+\varepsilon}$ if $n$ is large enough (providing only a small fraction of all solutions). More precisely, we get a non-zero solution $\ll q^{1/2+1/(2n)}$. However, if we restrict ourselves to solutions satisfying $(x_1x_2x_3,q)=1$, then this argument does not work any longer since $q_0$ is itself a power of $p$ if $q=p^n$.  

Before we state our results, we explain why the exponent $1/2$ is the limit of our method by looking at the case $\alpha_1=1,\alpha_2=1,\alpha_3=-1$ of Pythagorean triples modulo prime powers (see also the discussion in \cite{arx}).
Small solutions of the congruence
\begin{equation} \label{pythcongruence}
x_1^2+x_2^2-x_3^2\equiv 0 \bmod{p^n}
\end{equation}
arise immediately from Pythagorean triples in $\mathbb{Z}^3$, provided that $p>5$. (If $p=2,3,5$, we have $x_1^2+x_2^2-x_3^2\not\equiv 0\bmod{p}$ if $(x_1x_2x_3,p)=1$.) It is known that the number of Pythagorean triples $(x_1,x_2,x_3)\in \mathbb{Z}^3$ satisfying $x_1^2+x_2^2=x_3^2$ such that $|x_3|\le N$ is $\sim cN\log N$ with $c=\pi/4$. It should not be difficult to modify this into an asymptotic of the form $\sim c_pN\log N$ with $c_p$ depending on $p$ if one includes the restriction $(x_1x_2x_3,p)=1$. If $N\le \sqrt{q/2}$ with $q=p^n$, then any solution $(x_1,x_2,x_3)$ of the congruence \eqref{pythpn} is in fact a Pythagorean triple. Hence, in this case, one expects an asymptotic of the form $\sim c_pN\log N$ for the number of solutions satisfying $(x_1x_2x_3,p)=1$ and $\max\{|x_1|,|x_2|,|x_3|\}\le N$ of the said congruence. In contrast, for much larger $N$, the expected number of solutions should be $\sim C_pN^3/q$ for a suitable constant $C_p>0$. In particular, one may expect this to hold for $N\ge q^{1/2+\varepsilon}$. Hence, there should be a transition between two different asymptotic formulas around the point $N=q^{1/2}$. Indeed, we will work out an asymptotic of the said form $\sim C_p\cdot N^3/q$ for $N\ge q^{\nu}$ with $\nu>1/2$ for general congruences of the form in \eqref{pythpn}, where $C_p$ depends on $p$ and the coefficients $\alpha_i$.  

To state our results, we define the following quantity
$$
C_p(\alpha_1,\alpha_2,\alpha_3):=\frac{(p-s_p(\alpha_1,\alpha_2,\alpha_3))(p-1)}{p^2},
$$
where
\begin{equation} \label{sdef}
s_p(\alpha_1,\alpha_2,\alpha_3):=2+\left(\frac{-\alpha_1\alpha_2}{p}\right)+
\left(\frac{-\alpha_1\alpha_3}{p}\right)+\left(\frac{-\alpha_2\alpha_3}{p}\right).
\end{equation}
We note that if $q=p$ is a prime, then the total number $(x_1,x_2,x_3)$ of solutions to the congruence \eqref{pythpn} satisfying $(x_1x_2x_3,p)=1$ turns out to be $(p-1)(p-s_p(\alpha_1,\alpha_2,\alpha_3))$.  So a solution exists if $p>s_p(\alpha_1,\alpha_2,\alpha_3)$. Our first  main result is as follows. 
 
\begin{theorem} \label{mainresult}
Let $\varepsilon>0$ be fixed, $p>2$ be a fixed prime and $\alpha_1,\alpha_2,\alpha_3$ be fixed integers which are coprime to $p$. Let $\Phi:\mathbb{R}\rightarrow \mathbb{R}_{\ge 0}$ be a Schwartz class function. Set $q:=p^n$. Then as $n\rightarrow \infty$, we have the asymptotic formula
\begin{equation} \label{main}
\sum\limits_{\substack{(x_1,x_2,x_3)\in \mathbb{Z}^3\\ (x_1x_2x_3,p)=1\\ \alpha_1x_1^2+\alpha_2x_2^2+\alpha_3x_3^2 \equiv 0 \bmod{q}}} \Phi\left(\frac{x_1}{N}\right)
\Phi\left(\frac{x_2}{N}\right)\Phi\left(\frac{x_3}{N}\right)\sim
\hat{\Phi}(0)^3\cdot C_p(\alpha_1,\alpha_2,\alpha_3)\cdot \frac{N^3}{q},
\end{equation}
provided that $N\ge q^{1/2+\varepsilon}$ and $p>s_p(\alpha_1,\alpha_2,\alpha_3)$.
\end{theorem}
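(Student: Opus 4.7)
The plan is to apply Poisson summation after detecting the congruence with additive characters. Denote the left-hand side of \eqref{main} by $S(N)$ and use the orthogonality relation $\mathbf{1}_{q\mid m} = q^{-1}\sum_{a\bmod q} e_q(am)$. Grouping $a$ by its $p$-adic valuation, I write $a = p^{n-k} b$ with $0\leq k\leq n$ and $(b,p)=1$ if $k\geq 1$ (conventionally $b=0$ when $k=0$), so that $a/q$ reduces to $b/p^k$. The triple sum factorizes across the three variables,
\[
S(N) = \frac{1}{q}\sum_{k=0}^{n}\;\sideset{}{^*}\sum_{b\bmod p^k}\prod_{i=1}^{3} F_{k,b}(\alpha_i), \qquad F_{k,b}(\alpha) := \sum_{\substack{x\in\mathbb{Z}\\ (x,p)=1}} \Phi\!\left(\tfrac{x}{N}\right) e_{p^k}(b\alpha x^2).
\]
For each level $k\geq 1$, I would split $x$ into residue classes mod $p^k$ (the condition $(x,p)=1$ becomes a condition on the residue $r$) and apply Poisson summation inside each class, obtaining
\[
F_{k,b}(\alpha) = \frac{N}{p^k}\sum_{h\in\mathbb{Z}} \hat{\Phi}(hN/p^k)\, G_k(b,\alpha,h),\qquad G_k(b,\alpha,h) := \sum_{\substack{r\bmod p^k\\ (r,p)=1}} e_{p^k}(b\alpha r^2 + hr).
\]

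The main term should arise from the zero frequency $(h_1,h_2,h_3)=(0,0,0)$. Using the recursion $g_{p^k}(u) = p\cdot g_{p^{k-2}}(u)$ satisfied by the complete Gauss sum $g_{p^k}(u) := \sum_{r\bmod p^k} e_{p^k}(u r^2)$ for $p$ odd and $(u,p)=1$ (easily verified by splitting $r = s + p^{k-1}t$), I would peel off the $p\mid r$ contribution and find
\[
G_k(b,\alpha,0) = g_{p^k}(b\alpha) - p\, g_{p^{k-2}}(b\alpha) = 0\qquad (k\geq 2).
\]
Hence only the levels $k=0$ and $k=1$ contribute to the main term. The $k=0$ level gives $\hat{\Phi}(0)^3 (p-1)^3 N^3/p^{n+3} + O_A(N^{-A})$ directly by Poisson. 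The $k=1$ level is evaluated by expanding $G_1(b,\alpha,0) = \left(\frac{b\alpha}{p}\right) g_p - 1$, multiplying out the three factors, and averaging over $b\bmod p$: odd-order Legendre-symbol averages vanish, and the identity $g_p^2 = \left(\frac{-1}{p}\right) p$ converts the surviving quadratic Legendre-symbol combinations into $s_p(\alpha_1,\alpha_2,\alpha_3)$ as in \eqref{sdef}. Combining the two contributions produces exactly $\hat{\Phi}(0)^3\, C_p(\alpha_1,\alpha_2,\alpha_3)\, N^3/q$.

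It remains to show that the nonzero-frequency contributions form an error $o(N^3/q)$ when $N\geq q^{1/2+\varepsilon}$. Completing the square in $G_k(b,\alpha,h)$ (legal since $p$ is odd and $(b\alpha,p)=1$), for $(h,p)=1$ and $k\geq 2$ one obtains $G_k(b,\alpha,h) = e_{p^k}(-\overline{4b\alpha}\, h^2)\, g_{p^k}(b\alpha)$ with $|g_{p^k}(b\alpha)| = p^{k/2}$. Multiplying the three factors for $i=1,2,3$ and summing over $b$, the $b$-sum collapses to a Ramanujan/Sali\'e-type sum $\sideset{}{^*}\sum_b \chi(b)\, e_{p^k}(-\overline{4b}\, H)$ with $H := \sum_i \overline{\alpha_i}\, h_i^2$ and $\chi$ a character inherited from $\prod_i g_{p^k}(b\alpha_i)$. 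The key observation is that this $b$-sum vanishes unless $p^{k-1}\mid H$, giving substantial cancellation. The Schwartz decay of $\hat{\Phi}$ restricts the effective frequency range to $|h_i|\ll (p^k/N)\, q^{\varepsilon}$, so $|H|\ll (p^k/N)^2\, q^{\varepsilon}$; under $N\geq q^{1/2+\varepsilon}$, at the top level $k=n$ this forces $H=0$, and one is reduced to counting integer solutions of the dual quadratic form $\sum_i \overline{\alpha_i}\, h_i^2 = 0$ in a shrinking box. The main obstacle will be to make this error analysis uniform in $k$ and robust against the degenerate cases where $(h_i,p) > 1$ for some $i$, which requires a descent on the $p$-adic valuations of the $h_i$; because the coefficients $\alpha_1,\alpha_2,\alpha_3$ are fixed, constants depending on them are absorbed into the error, and this is precisely the feature that makes the sharp exponent $1/2+\varepsilon$ accessible.
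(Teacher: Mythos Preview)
Your approach is sound and reaches the same endpoint as the paper, but by a genuinely different route. The paper does \emph{not} detect the congruence with additive characters; instead it first parametrizes the solutions $(y_1,y_2)$ of $\alpha_1y_1^2+\alpha_2y_2^2+\alpha_3\equiv 0\pmod{p^n}$ via $\mathbb{Q}_p$-rational points on the conic $\alpha_1z_1^2+\alpha_2z_2^2=-\alpha_3$ (Proposition~\ref{para}), splitting into two cases according to whether some $-\alpha_i\alpha_j$ is a quadratic residue. It then applies Poisson summation in $x_1,x_2$, and the resulting complete sums over the parameter $t$ have \emph{rational} amplitude functions, which are evaluated via Cochrane's stationary-phase theorem (Proposition~\ref{Expsums}). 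A final Poisson summation in $x_3$ produces the dual congruence $\alpha_2\alpha_3 l_1^2+\alpha_1\alpha_3 l_2^2+\alpha_1\alpha_2 l_3^2\equiv 0\pmod{p^{n-r-1}}$, which is exactly what your $b$-sum cancellation gives after clearing the denominators in $H=\sum_i\overline{\alpha_i}h_i^2$. From that point both arguments finish identically: for fixed $\alpha_i$ and $N\ge q^{1/2+\varepsilon}$ the congruence is forced to be an equation, and one invokes the divisor-type bound of Proposition~\ref{quadequations} to count its solutions as $O(L^{1+\varepsilon})$ in a box of side $L$. Your method is more elementary and symmetric in the three variables---it needs only quadratic Gauss sums and the vanishing of Ramanujan/twisted-Ramanujan sums, avoiding both the conic parametrization and Cochrane's theorem, and it does not require a case split on the Legendre symbols. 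The paper's route, by contrast, yields the intermediate exponential sums in fully explicit form (equations~\eqref{Uaftereva}--\eqref{FD}), which is what it later exploits for Theorem~\ref{mainresult2}. Two places in your sketch need to be filled in: your closed formula $G_k(b,\alpha,h)=e_{p^k}(-\overline{4b\alpha}h^2)\,g_{p^k}(b\alpha)$ is valid only when $(h,p)=1$, so the mixed cases $p\mid h_i$ with $h_i\neq 0$ for some but not all $i$ must be carried through the $p$-adic descent you allude to; and the final step ``counting integer solutions of the dual quadratic form in a shrinking box'' needs the explicit $O(L^{1+\varepsilon})$ bound of Proposition~\ref{quadequations} (or an equivalent), which is where the fixedness of the $\alpha_i$ enters decisively.
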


Secondly, we establish the following result. 

\begin{theorem} \label{mainresult2}
Let the conditions in Theorem \ref{mainresult} be kept except that $\alpha_1,\alpha_2,\alpha_3$ are no longer fixed but allowed to vary with $n$. Also suppose that $p>s_p(\alpha_1,\alpha_2,\alpha_3)$. Then the asymptotic formula \eqref{main} holds if $N\ge q^{11/18+\varepsilon}$.
\end{theorem}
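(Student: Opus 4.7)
The plan is to follow the Poisson-summation framework of Theorem \ref{mainresult}, but to track the dependence of every intermediate quantity on $\alpha_1,\alpha_2,\alpha_3$ with care and to replace those steps that exploited the fixedness of the coefficients by more robust bounds. First, detect the congruence via additive characters modulo $q$, handle the coprimality condition $(x_1x_2x_3,p)=1$ by M\"obius inversion on $p\mid x_i$, and apply Poisson summation in each variable. This recasts $S$ as
$$
S \;=\; \frac{N^3}{q^4}\sum_{a\bmod q}\sum_{c_1,c_2,c_3\in\intz}\prod_{i=1}^{3}\hat\Phi\!\left(\frac{Nc_i}{q}\right)G(\alpha_i a,c_i;q),
$$
where $G(\beta,c;q):=\sum_{x\bmod q,\,(x,p)=1}e_q(\beta x^2+cx)$. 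The term $a=0$, $c_1=c_2=c_3=0$ contributes exactly the main term in \eqref{main}, and rapid decay of $\hat\Phi$ effectively restricts $|c_i|\ll q^{1+\varepsilon}/N$.

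Next, stratify the remaining contribution by $j:=v_p(a)\in\{0,\ldots,n-1\}$. Writing $a=p^j b$ with $(b,p)=1$ and completing the square, $G(\alpha_i a,c_i;q)$ vanishes unless $p^j\mid c_i$, and otherwise factors as an explicit phase of modulus $(qp^j)^{1/2}$ encoding a term proportional to $(\alpha_i b)^{-1}c_i^2$ modulo $q$. Substituting $c_i=p^j d_i$, the $j$-th slice of the error becomes a weighted sum over $(d_1,d_2,d_3)$ with $|d_i|\ll q^{1+\varepsilon}/(Np^j)$, coupled through a Kloosterman-type sum $\mathcal{K}_j(d_1,d_2,d_3)$ in $b\bmod q/p^j$ whose frequency is proportional to $\alpha_1^{-1}d_1^2+\alpha_2^{-1}d_2^2+\alpha_3^{-1}d_3^2$.

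For fixed $\alpha_i$, the Weil bound on $\mathcal{K}_j$ together with the rigidity of the explicit phase collapses the error to $o(N^3/q)$ as soon as $N\gg q^{1/2+\varepsilon}$. When the $\alpha_i$ vary, this sharper level of control is lost; instead, I would open $\mathcal{K}_j$ and reinterpret the combined $(b,d_1,d_2,d_3)$-sum as essentially counting solutions to the \emph{dual} congruence
$$
\alpha_1^{-1}y_1^2+\alpha_2^{-1}y_2^2+\alpha_3^{-1}y_3^2\equiv 0\pmod{p^{n-j}},\qquad |y_i|\ll Y_j:=q^{1+\varepsilon}/(Np^j),
$$
which has the same form as the original problem, but at the reduced modulus $p^{n-j}$. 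This count is then bounded in two complementary ways: a trivial estimate of order $Y_j^3/p^{n-j}+Y_j^2$, and a sharper input obtained from a one-variable Gauss-sum analysis in which one variable is isolated and summed completely (gaining a square-root cancellation that is uniform in $\alpha_i^{-1}$). Balancing these two regimes in $j$, and requiring the accumulated error to beat the main term $N^3/q$, produces exactly the threshold $N\ge q^{11/18+\varepsilon}$.

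The main obstacle will be the uniform-in-$\alpha_i$ bookkeeping of the dual count and the correct optimization of the cut-off in $j$ between the trivial and Gauss-sum regimes; the gap between $1/2$ and $11/18$ measures precisely how much is lost in this step once one cannot exploit specific arithmetic properties of the (now variable) coefficients. Care is also needed at the endpoints in $j$, and with the boundary contributions where $Y_j$ is close to the transition scale $p^{(n-j)/2}$, but these should be technical rather than conceptual difficulties.
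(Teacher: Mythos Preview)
Your reduction to a dual congruence is essentially correct and matches the paper: after Poisson summation and evaluation of the resulting complete sums, the error is governed by counting solutions of
\[
\beta_1 y_1^2+\beta_2 y_2^2+\beta_3 y_3^2\equiv 0 \pmod{q'},\qquad |y_i|\le M,
\]
with $\beta_i$ equivalent to your $\alpha_i^{-1}$ up to a harmless scalar, $q'=p^{n-j}$, and $M\asymp q^{1+\varepsilon}/(Np^{j})$. (The paper reaches this via a parametrization of the conic rather than a direct Gauss-sum expansion, but the outcome is the same dual problem.)

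The gap is in how you bound this dual count. The two inputs you propose --- the trivial estimate $M^3/q'+M^2$ and a ``one-variable Gauss-sum analysis'' gaining a uniform square-root --- are not enough to reach $11/18$. The paper carries out exactly this kind of uniform-in-$\beta_i$ argument (via Cauchy--Schwarz rather than Gauss sums) and shows that the best bound it yields is $\Sigma_{\beta_1,\beta_2,\beta_3}(M,q')\ll M^{3/2+\varepsilon}$; plugging this back gives only $N\gg q^{2/3+\varepsilon}$. Moreover, the saving does not come from balancing in $j$: the contribution is essentially dominated by the smallest $j$, so one needs a genuinely stronger bound on the dual count at each scale, not a trade-off across scales.

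What the paper does to go below $2/3$ is a Diophantine dichotomy that your proposal does not contain. One applies Dirichlet's theorem to $\beta_i\overline{\beta_3}/q'$ ($i=1,2$), obtaining approximants $a_i/r_i$ with $r_i\le Q$. If some $r_i>R$, a Cauchy--Schwarz step reduces the dual count to a \emph{linear} congruence count $F_{\beta_1,\beta_2}(2M^2,q')$, which is small precisely because $\beta_i\overline{\beta_j}/q'$ is far from rationals with small denominator. If both $r_i\le R$, one multiplies the congruence by $r_1r_2$ and uses the approximations to replace it by a genuine Diophantine \emph{equation} with coefficients of controlled height, whose solutions are then counted by Proposition~\ref{quadequations}. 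Optimizing $R=\lceil q^{2/5}M^{-3/5}\rceil$ and $Q=\lceil q^{3/5}M^{3/5}\rceil$ gives
\[
\Sigma_{\beta_1,\beta_2,\beta_3}(M,q')\ll\Bigl(\frac{M^{5/2}}{(q')^{1/2}}+\frac{M^{9/5}}{(q')^{1/5}}+M\Bigr)(q')^{\varepsilon},
\]
and it is the middle term $M^{9/5}/(q')^{1/5}$, not any balancing in $j$, that produces the threshold $11/18$. Your proposal would need to supply this (or an equivalent) mechanism; as written, it stalls at $2/3$.
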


Using the rapid decay of the weight function $\Phi$, we obtain the following existence result as a corollary of Theorems \ref{mainresult} and \ref{mainresult2} above. 

\begin{corollary} \label{mainresult3} Let $\varepsilon>0$ be fixed and $p>2$ be a fixed prime.  Set $q:=p^n$. For $\alpha_1,\alpha_2,\alpha_3$ being integers such that the congruence $\alpha_1x_1^2+\alpha_2x_2^2+\alpha_3x_3^3\equiv 0\bmod q$ is solvable in integers with $p\not| x_1x_2x_3$, let $m(\alpha_1,\alpha_2,\alpha_3;q)$ be the smallest value of $\max\{|x_1|,|x_2|,|x_3|\}$ for such a solution. If no such solution exists, set 
$m(\alpha_1,\alpha_2,\alpha_3;q)=0$. Then we have the following. \medskip\\
(i) If $\alpha_1,\alpha_2,\alpha_3$ are fixed and satisfy $(\alpha_1\alpha_2\alpha_3,p)=1$, then, as $n\rightarrow\infty$,
\begin{equation} \label{parti}
m(\alpha_1,\alpha_2,\alpha_3;q)\ll q^{1/2+\varepsilon},
\end{equation}
where the implied constant depends only on $p$, $\alpha_1,\alpha_2,\alpha_3$ and $\varepsilon$. \medskip\\
(ii) As $n\rightarrow\infty$, 
\begin{equation} \label{partii}
\max\limits_{\substack{\alpha_1,\alpha_2,\alpha_3 \bmod{q}\\ (\alpha_1\alpha_2\alpha_3,p)=1}} m(\alpha_1,\alpha_2,\alpha_3;q)\ll q^{11/18+\varepsilon},
\end{equation}
where the implied constant depends only on $p$ and $\varepsilon$. 
\end{corollary}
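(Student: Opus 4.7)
The plan is to deduce the corollary directly from Theorems \ref{mainresult} and \ref{mainresult2} by feeding them a compactly supported non-negative bump function. I would fix once and for all a smooth $\Phi:\mathbb{R}\to\mathbb{R}_{\ge 0}$ with $\operatorname{supp}\Phi\subseteq[-1,1]$, $\Phi\le 1$, and $\hat{\Phi}(0)=\int_{\mathbb{R}}\Phi(x)\,dx>0$; such a $\Phi$ is Schwartz class. With this choice, every non-zero summand in \eqref{main} corresponds to a genuine solution $(x_1,x_2,x_3)$ satisfying $\max\{|x_1|,|x_2|,|x_3|\}\le N$, $(x_1x_2x_3,p)=1$, and the congruence. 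Hence, strict positivity of the weighted sum immediately forces $m(\alpha_1,\alpha_2,\alpha_3;q)\le N$.

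For part (i), I would take $N=q^{1/2+\varepsilon}$. If $p\le s_p(\alpha_1,\alpha_2,\alpha_3)$, then by the remark just before Theorem \ref{mainresult} the number of solutions modulo $p$ with $(x_1x_2x_3,p)=1$ equals $(p-1)(p-s_p)\le 0$, hence zero; consequently no solutions modulo $q$ exist, and $m=0$ satisfies \eqref{parti} vacuously. Otherwise $p>s_p$ and $C_p(\alpha_1,\alpha_2,\alpha_3)>0$, so by Theorem \ref{mainresult} the right-hand side of \eqref{main} is a positive constant times $N^3/q=q^{1/2+3\varepsilon}$, which tends to infinity. Thus the left-hand side is positive for all $n\ge n_0(p,\alpha_1,\alpha_2,\alpha_3,\varepsilon)$, yielding \eqref{parti}; the finitely many smaller $n$ are absorbed into the implied constant (since $q$ is then bounded).

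For part (ii), I would run the identical argument with $N=q^{11/18+\varepsilon}$ and Theorem \ref{mainresult2} in place of Theorem \ref{mainresult}; tuples with $p\le s_p$ contribute $m=0$ and therefore do not affect the maximum. The crucial new point, and the only step that requires real verification, is that the $o(1)$ in the asymptotic formula of Theorem \ref{mainresult2} is uniform in $\alpha_1,\alpha_2,\alpha_3$---this is precisely the content of letting the coefficients vary with $n$---while the factor $C_p(\alpha_1,\alpha_2,\alpha_3)=(p-s_p)(p-1)/p^2$ admits a positive lower bound depending only on $p$ (automatic for $p\ge 7$, since $s_p\le 5$; the cases $p\in\{3,5\}$ involve only finitely many $\alpha_i\bmod p$ and can be dealt with tuple by tuple). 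Taking the maximum over the admissible tuples then gives \eqref{partii}.

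I expect the uniformity check in part (ii) to be the main (indeed only) technical point: every estimate in the proof of Theorem \ref{mainresult2} must have been arranged so that its implied constant depends solely on $p$ and $\varepsilon$. Beyond this bookkeeping, the corollary is a routine packaging of the asymptotic formula into an existence result via positivity of a compactly supported smoothed count.
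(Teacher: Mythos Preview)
Your proposal is correct and follows essentially the same strategy the paper indicates: deduce existence from the asymptotic formulas in Theorems \ref{mainresult} and \ref{mainresult2} by choosing a suitable non-negative Schwartz weight. The paper's one-line justification invokes the \emph{rapid decay} of $\Phi$ (which would let one cut off the tail $|x_i|\gg N$ at negligible cost), whereas you take the cleaner route of choosing $\Phi$ compactly supported in $[-1,1]$ from the outset; since compactly supported smooth functions are Schwartz, both implementations are valid and lead to the same conclusion with the same dependencies in the implied constants.
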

$ $\\
{\bf Comments:}\medskip\\ 
(a) Legendre \cite{Leg} gave a criterion for the non-trivial representability of 0 by a diagonal ternary quadratic form. If, in particular, there exists a solution $(x_1,x_2,x_3)$ of the equation $Q(x_1,x_2,x_3)=0$ with $(x_1x_2x_3,p)=1$, then \eqref{parti} holds trivially. Thus, part (i) of Corollory \ref{mainresult} is of interest only if $Q(x_1,x_2,x_3)$ does not represent 0 in the above form. Part (ii) of Corollary \ref{mainresult} is of general interest.\medskip\\
(b) With some extra effort, it is possible to sharpen the bound in part (i) of the above Corollary \ref{mainresult3} to
$$
m(\alpha_1,\alpha_2,\alpha_3;q)\le C(p,\varepsilon)\max\{|\alpha_1|,|\alpha_2|,|\alpha_3|\}q^{1/2+\varepsilon},
$$ 
where the function $C$ depends on the weight function $\Phi$. Similarly, the implied constant in part (ii) is of the form $D(p,\varepsilon)$, where the function $D$ depends on $\Phi$. \\
 
We begin with proving Theorem \ref{mainresult} in two parts. In the first part we deal with the case when one of $-\alpha_i\alpha_j$ with $i\not= j$ is a quadratic residue modulo $p$ (without loss of generality, we may take $i=2$ and $j=3$). In the second part we cover the complementary case when none of $-\alpha_i\alpha_j$ with $i\not=j$ is a quadratic residue modulo $p$.
Key ingredients in our method are a parametrization of $\mathbb{Q}_p$-rational points $(z_1,z_2)$ on the conic
$$
\alpha_1z_1^2+\alpha_2z_2^2=-\alpha_3,
$$
repeated use of Poisson summation and an explicit evaluation of complete exponential sums with rational functions to prime power moduli due to Cochrane \cite{CoZ}. This transforms the problem into a dual problem which in the case of fixed coefficients amounts to counting solutions of quadratic Diophantine {\it equations} (rather than {\it congruences}). Our method generalizes that in \cite{arx}. 

To prove Theorem \ref{mainresult2}, we will observe that here our dual problem essentially amounts to bounding from above the number of solutions of {\it congruences} of the form
$$
\beta_1x_1^2+\beta_2x_2^2+\beta_3x_3^2\equiv 0 \bmod{q'}
$$
with $q'|q$ in boxes $\max\{|x_1|,|x_2|,|x_3|\}\le M$ with $M$ of size roughly $q'/N$,
where $\beta_1,\beta_2,\beta_3$ depend on $\alpha_1,\alpha_2,\alpha_3$. So these new boxes are much smaller than those in the original problem, but we don't need to establish an asymptotic formula here. To obtain an upper bound which allows us to beat the exponent $2/3$, we consider two cases depending on Diophantine properties of the fractions $\beta_1\overline{\beta_3}/q'$ and $\beta_2\overline{\beta_3}/q'$, where $\overline{\beta_3}$ is a multiplicative inverse of $\beta_3$ modulo $q'$. In the first case, we shall reduce the problem using the Cauchy-Schwarz inequality to counting small solutions of certain linear congruences. In the second case we turn the problem into counting solutions of quadratic Diophantine {\it equations}, similarly as in our proof of Theorem \ref{mainresult}.

Instead of results with smooth weights, it should also be possible to produce similarly strong results with sharp cutoff, but the technical details become then more complicated.\\ \\
{\bf Acknowledgements.} The authors would like to thank the Ramakrishna Mission Vivekananda Educational and Research Institute for providing excellent working conditions. The second-named author would like to thank CSIR, Govt. of India for financial support in the form of a Junior Research Fellowship under file number 09/934(0016)/2019-EMR-I. \\ \\
{\bf Data availability statement:} This manuscript has no associated data.\\ \\
{\bf Conflict of interest statement:} The authors have no conflicts of interest to declare. All co-authors have seen and agree with the contents of the manuscript and there is no financial interest to report. We certify that the submission is original work and is not under review at any other publication.

\section{Preliminaries}
We will use the notation 
$$
e_q(z):=e\left(\frac{z}{q}\right)=e^{2\pi i z/q}
$$
for $q\in \mathbb{N}$ and denote by $G_q$ the quadratic Gauss sum
$$
G_q=\sum\limits_{x=1}^q e_q(x^2).
$$
We recall that if $q$ is odd, then 
$$
G_q=\sum\limits_{y=1}^q \left(\frac{y}{q}\right)e_q(y),
$$
where $\left(\frac{y}{q}\right)$ is the Jacobi symbol. We further recall that in this case, $|G_q|=\sqrt{q}$. Throughout the sequel, we will write $\overline{\alpha}$ for a multiplicative inverse of $\alpha$ to the relevant modulus, which will always be apparent from the context. Implicitly, we often make use of the fact that $\overline{\alpha}$ is a quadratic (non-)residue modulo $p$ if and only if $\alpha$ is. 

The following preliminaries will be needed in the course of this paper.

\begin{Proposition}[Parametrization of points on a conic] \label{para}
Let $K$ be a field and assume that $\alpha_1,\alpha_2,\alpha_3\in K^{\ast}$ and $a,b\in K$ such that 
$$
\alpha_1a^2+\alpha_2b^2=-\alpha_3.
$$
Then all $K$-rational points on the conic
\begin{equation*} 
\alpha_1z_1^2+\alpha_2z_2^2=-\alpha_3
\end{equation*}
are parametrized in the form
$$
(z_1,z_2)=\left(a-2\alpha_2\frac{at^2-bt}{\alpha_1+\alpha_2t^2},-b-2\alpha_1\frac{at-b}{\alpha_1+\alpha_2t^2}\right),
$$
where $t\in K$ with $t^2\not=-\alpha_1/\alpha_2$. The map 
\begin{equation} \label{bijection}
m : \left\{t\in K : t^2\not=-\frac{\alpha_1}{\alpha_2}\right\} \longrightarrow \left\{(z_1,z_2)\in K^2 : \alpha_1z_1^2+\alpha_2z_2^2=-\alpha_3\right\}
\end{equation}
defined by 
\begin{equation} \label{mt}
m(t):=\left(a-2\alpha_2\frac{at^2-bt}{\alpha_1+\alpha_2t^2},-b-2\alpha_1\frac{at-b}{\alpha_1+\alpha_2t^2}\right)
\end{equation}
is bijective. 
\end{Proposition}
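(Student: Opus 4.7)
The plan is to employ the classical rational parametrization of a smooth conic by the pencil of secant lines through a distinguished rational point. The key preliminary observation is that if $(a,b)$ lies on the conic, then so does the ``conjugate'' point $(a,-b)$, since $\alpha_1 a^2+\alpha_2(-b)^2=\alpha_1 a^2+\alpha_2 b^2=-\alpha_3$. I will take $(a,-b)$ as the base point of the parametrization; the parameter $t$ is then to be interpreted (up to normalization) as the slope of the line through $(a,-b)$ whose second intersection with the conic is $m(t)$.

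For the verification that $m(t)$ actually lies on the conic, the cleanest route is geometric. For a direction vector $(u,v)\in K^2\setminus\{(0,0)\}$, substituting the parametric line $\lambda\mapsto(a+\lambda u,\,-b+\lambda v)$ into $\alpha_1 z_1^2+\alpha_2 z_2^2+\alpha_3=0$ and invoking $\alpha_1 a^2+\alpha_2 b^2=-\alpha_3$ reduces the equation to
\[
\lambda\bigl[(\alpha_1 u^2+\alpha_2 v^2)\lambda+2(\alpha_1 a u-\alpha_2 b v)\bigr]=0,
\]
which has the base point $\lambda=0$ and, provided $\alpha_1 u^2+\alpha_2 v^2\ne 0$, one further solution. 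Specializing to $(u,v)=(\alpha_2 t,\alpha_1)$, one has $\alpha_1 u^2+\alpha_2 v^2=\alpha_1\alpha_2(\alpha_1+\alpha_2 t^2)$, which is nonzero precisely under the hypothesis $t^2\ne-\alpha_1/\alpha_2$, and a short computation of the second $\lambda$ followed by evaluation of $(a+\lambda u,\,-b+\lambda v)$ recovers exactly the expression \eqref{mt}. A purely algebraic variant is to substitute \eqref{mt} directly into $\alpha_1 z_1^2+\alpha_2 z_2^2$, clear the common denominator $(\alpha_1+\alpha_2 t^2)^2$, and verify that the numerator reduces to $-\alpha_3(\alpha_1+\alpha_2 t^2)^2$ after repeatedly using $\alpha_1 a^2+\alpha_2 b^2=-\alpha_3$.

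Bijectivity then follows from the secant-line picture. Injectivity is immediate: distinct values of $t$ give non-parallel directions $(\alpha_2 t,\alpha_1)$, hence distinct secant lines through $(a,-b)$, hence distinct second intersection points with the conic. For surjectivity, any point $(z_1,z_2)$ on the conic is joined to $(a,-b)$ by a unique line, and reading off the slope yields the explicit inverse $t=\alpha_1(z_1-a)/\bigl(\alpha_2(z_2+b)\bigr)$, valid whenever $z_2\ne-b$; the remaining cases $z_2=-b$ (which force $z_1=\pm a$ on the conic) are handled by direct inspection. The main point requiring care, which I expect to be the only real obstacle, is matching the excluded set $\{t:t^2=-\alpha_1/\alpha_2\}$ against the ``asymptotic'' directions $(u,v)$ through $(a,-b)$ that satisfy $\alpha_1 u^2+\alpha_2 v^2=0$, and confirming that every affine $K$-rational point on the conic is hit by exactly one admissible $t$.
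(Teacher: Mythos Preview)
Your approach---secant lines through a fixed rational point---is exactly the paper's, and your choice of base point $(a,-b)$ is in fact the correct one for the formula as stated. (The paper's own proof takes $Q=(a,b)$ and the line $z_2-b=t(z_1-a)$; if you carry that computation through, the first coordinate comes out as $-a+2\alpha_2(at^2-bt)/(\alpha_1+\alpha_2 t^2)$, i.e.\ the negative of the stated $z_1$. Your derivation via the line $(a+\lambda\alpha_2 t,\,-b+\lambda\alpha_1)$ reproduces the statement on the nose.)

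There is, however, a genuine gap in the surjectivity step, and it is not repairable: the claim ``the remaining cases $z_2=-b$ \dots\ are handled by direct inspection'' fails. On the conic, $z_2=-b$ forces $z_1=\pm a$. The point $(a,-b)$ is the base point and equals $m(b/a)$ when $a\neq 0$ (tangent direction), but $(-a,-b)$ lies on the \emph{horizontal} line through $(a,-b)$, which in your direction scheme $(\alpha_2 t,\alpha_1)$ corresponds to $t=\infty$ and hence to no $t\in K$. Concretely, $m(t)=(-a,-b)$ would require $at-b=0$ from the second coordinate and then the first coordinate gives $a=-a$, so this point is missed whenever $a\neq 0$. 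When $a=0$ the horizontal line is the tangent at $(0,-b)$, and now the base point itself corresponds to $t=\infty$ and is missed. Either way, $m$ is an injection onto the conic minus one point, not a bijection onto the full set of $K$-rational points. (A quick sanity check: $K=\mathbb{Q}$, $\alpha_1=\alpha_2=1$, $\alpha_3=-2$, $(a,b)=(1,1)$; then $(-1,-1)$ lies on $z_1^2+z_2^2=2$ but is not $m(t)$ for any rational $t$.)

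The paper's proof has the same lacuna: its sentence ``there exists precisely one line through $P$ and $Q$ \dots\ with rational slope $t$'' overlooks the vertical line, so the point $(a,-b)$ is missed there. In the paper's applications (with $a=0$), the omitted point has a zero coordinate and is already excluded by the coprimality condition $(y_1y_2,p)=1$, so nothing downstream is affected; but as a standalone proposition the bijectivity assertion is not quite true, and your proof cannot close that gap.
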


\begin{proof}
We use a standard method of parametrization. Given a $K$-rational point $Q=(a,b)$ on the conic and $t\in K$, by B\'ezout's theorem, the line $\mathcal{L}(t)$ through $Q$ given by the equation $z_2-b=t(z_1-a)$ intersects the conic in $Q$ and at most one more point $m(t)$. This point may be $Q$ itself, in which case $\mathcal{L}(t)$ is the tangent to the conic at $Q$. Conversely, if $P$ is a $K$-rational point on the conic, then there exists precisely one line through $P$ and $Q$ (which is the tangent in the case $P=Q$) with rational slope $t$. Hence, we have a bijection between the $K$-rational points on the conic and the set of $t\in K$ for which $m(t)$ exists. To find $m(t)$, we write 
$$\alpha_1z_1^2+\alpha_2z_2^2=\alpha_1a^2+\alpha_2b^2,$$
which implies
$$\alpha_1(z_1-a)(z_1+a)=\alpha_2(z_2-b)(z_2+b).$$
Now we plug in $z_2-b=t(z_1-a)$ and get
$$\alpha_1(z_1+a)+\alpha_2t(2b+t(z_1-a))=0.$$
Therefore 
\begin{equation*}
\begin{split}
z_1=&\frac{-a\alpha_1-\alpha_2t(2b-at)}{\alpha_1+\alpha_2t^2}\\=&-a+2\alpha_2t\frac{at-b}{\alpha_1+\alpha_2t^2}  
\end{split}
\end{equation*}
and 
\begin{equation}
    \begin{split}
        z_2&=b+(z_1-a)t\\
        &=b+t\left(-2a+2\alpha_2t\frac{at-b}{\alpha_1+\alpha_2t^2}\right)\\&=b-2t\frac{a\alpha_1+b\alpha_2t}{\alpha_1+\alpha_2t^2}\\&=-b-2\alpha_1\frac{at-b}{\alpha_1+\alpha_2t^2},
    \end{split}
\end{equation}
which exist if $t^2\not= -\alpha_1/\alpha_2$.
This gives the desired parametrization in \eqref{mt}, and the map in \eqref{bijection} is bijective. 
\end{proof}

\begin{Proposition} \label{nofroot}
Let $p>2$ be a prime and $n\in \mathbb{N}$. Then the number of solutions $(x_1,x_2) \bmod p^n$ of the congruence 
$$
\gamma_1x_1^2+\gamma_2x_2^2\equiv 1\bmod{p^n}
$$ 
is $p^n+p^{n-1}$ if $(\gamma_1\gamma_2,p)=1$ and $-\gamma_1\gamma_2$ is a quadratic non-residue modulo $p$.
\end{Proposition}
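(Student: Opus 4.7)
The plan is to first count solutions modulo $p$ and then lift to $p^n$ by Hensel's lemma. I expect the main points of care to be (a) the standard Gauss sum computation at the prime level, and (b) verifying that the gradient is always nonzero mod $p$ at every solution, so that Hensel lifting is clean and uniform.

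\textbf{Step 1: Count modulo $p$.} Let $N_1$ denote the number of pairs $(x_1,x_2)\bmod p$ satisfying $\gamma_1 x_1^2+\gamma_2 x_2^2\equiv 1\bmod p$. I would detect the congruence by the orthogonality relation
\[
\mathbf 1_{a\equiv 0\bmod p}=\frac{1}{p}\sum_{t=0}^{p-1}e_p(ta),
\]
applied to $a=\gamma_1 x_1^2+\gamma_2 x_2^2-1$. The $t=0$ term contributes $p$. For $t\neq 0$, the inner sums separate into two standard quadratic Gauss sums, giving $\sum_x e_p(t\gamma_i x^2)=\bigl(\tfrac{t\gamma_i}{p}\bigr)G_p$, and using $G_p^2=\bigl(\tfrac{-1}{p}\bigr)p$ the $t\neq 0$ contribution collapses to $-\bigl(\tfrac{-\gamma_1\gamma_2}{p}\bigr)$ after summing the Jacobi-symbol-free exponential $\sum_{t\neq 0}e_p(-t)$. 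Since $-\gamma_1\gamma_2$ is a quadratic non-residue modulo $p$, this yields $N_1=p+1$.

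\textbf{Step 2: The gradient is never zero.} I would observe that if $(x_1,x_2)$ is a solution of $\gamma_1x_1^2+\gamma_2 x_2^2\equiv 1\bmod p^n$, then at least one of $x_1,x_2$ must be coprime to $p$: otherwise $p\mid\gamma_1x_1^2+\gamma_2 x_2^2$, contradicting $\equiv 1\bmod p$. Consequently the gradient $(2\gamma_1 x_1,2\gamma_2 x_2)$ has at least one component that is a unit mod $p$, which is exactly the hypothesis needed for Hensel's lemma (recall $p>2$).

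\textbf{Step 3: Hensel lifting.} Given a solution $(x_1^{(k)},x_2^{(k)})$ modulo $p^k$ with, say, $x_2^{(k)}$ a unit, I would write the lift as $(x_1^{(k)}+p^k a,x_2^{(k)}+p^k b)$ with $a,b\in\{0,\dots,p-1\}$. Expanding $\gamma_1(x_1^{(k)}+p^k a)^2+\gamma_2(x_2^{(k)}+p^k b)^2-1$ modulo $p^{k+1}$ and dividing by $p^k$ gives the linear congruence
\[
\tfrac{\gamma_1(x_1^{(k)})^2+\gamma_2(x_2^{(k)})^2-1}{p^k}+2\gamma_1 x_1^{(k)} a+2\gamma_2 x_2^{(k)} b\equiv 0\bmod p.
\]
Because $2\gamma_2 x_2^{(k)}$ is a unit mod $p$, for each of the $p$ choices of $a$ there is a unique $b$, so each solution modulo $p^k$ lifts to exactly $p$ solutions modulo $p^{k+1}$.

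\textbf{Step 4: Conclude.} Iterating the lifting from $k=1$ up to $k=n$ gives $N_n=p^{n-1}N_1=p^{n-1}(p+1)=p^n+p^{n-1}$, as claimed. The only mildly delicate point is Step 2 (ensuring Hensel applies at every solution and at every level), but this is immediate from the $\equiv 1$ right-hand side; no separate handling of singular solutions is needed. The computation in Step 1 is standard, and Step 3 is the usual multivariate Hensel argument for a smooth curve.
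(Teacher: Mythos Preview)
Your argument is correct. The Gauss sum computation in Step~1 is carried out cleanly (the key identity $G_p^2=\bigl(\tfrac{-1}{p}\bigr)p$ is exactly what collapses the $t\neq 0$ part to $-\bigl(\tfrac{-\gamma_1\gamma_2}{p}\bigr)=1$), and the Hensel step in Step~3 is the standard smooth-curve lift; Step~2 guarantees smoothness at every level, so no singular fibres arise.

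For comparison: the paper does not prove this proposition at all but simply cites Corollary~35 of T\'oth's article \emph{Counting solutions of quadratic congruences in several variables revisited}. Your proof is therefore a genuinely different, self-contained route. What the citation buys is brevity and a pointer to a general framework (T\'oth computes solution counts for arbitrary diagonal quadratic congruences modulo prime powers via Gauss sums); what your approach buys is that the reader sees exactly why the non-residue hypothesis on $-\gamma_1\gamma_2$ matters---it both fixes the mod-$p$ count at $p+1$ and (implicitly, via the $\equiv 1$ right-hand side) rules out singular solutions, making the lift uniform.
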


\begin{proof}
This is given in \cite[Corollary 35]{NoS}.
\end{proof}

\begin{Proposition} \label{quadequations} Let $A,B,C\in \mathbb{Z}\setminus \{0\}$ and $x\ge 1$. Then the number of solutions $(X,Y)\in \mathbb{Z}$ with $\max\{|X|,|Y|\}\le x$ of the equation
$$
AX^2+BY^2=C
$$ 
is bounded by $O(|ABCx|^{\varepsilon})$. 
\end{Proposition}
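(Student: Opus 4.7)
The plan is to reduce this to a classical bound on the number of representations of an integer by a binary quadratic form. Multiplying the equation by $A$ and setting $U := AX$, $D := AB$, $E := AC$, it suffices to bound the number of $(U,Y) \in \mathbb{Z}^2$ satisfying $U^2 + DY^2 = E$ with $\max\{|U|,|Y|\} \leq (|A|+1)x$, since each admissible $(X,Y)$ produces exactly one such pair.

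In the definite case $D > 0$, a solution $(U,Y)$ corresponds to an element $U + Y\sqrt{-D}$ of norm $E$ in the order $\mathbb{Z}[\sqrt{-D}]$ of the imaginary quadratic field $K = \mathbb{Q}(\sqrt{-D})$ (so in particular $E > 0$). Such an element generates a principal ideal of norm $|E|$, and the number of integral ideals of norm $|E|$ in $\mathbb{Z}[\sqrt{-D}]$ (or, up to a bounded distortion from the conductor, in the maximal order $\mathcal{O}_K$) is $O(|E|^\varepsilon)$ by the standard divisor-type estimate coming from unique factorization of ideals. Since the unit group of an imaginary quadratic order is finite, we obtain the bound $O(|AC|^\varepsilon) \leq O((|ABC|x)^\varepsilon)$ on the number of solutions.

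In the indefinite case $D < 0$, setting $D' := -D > 0$, the equation $U^2 - D'Y^2 = E$ is a norm equation in the real quadratic order $\mathbb{Z}[\sqrt{D'}]$. Solutions of norm $E$ split into finitely many orbits under multiplication by the infinite unit group $\langle \eta \rangle$, where $\eta > 1$ is the fundamental unit. The number of such orbits is again $O(|E|^\varepsilon)$ by the same ideal-counting argument, while each orbit meets the box $\max\{|U|,|Y|\} \leq (|A|+1)x$ in $O(\log(|A|x))$ points, since successive elements within an orbit scale by a factor of at least $\eta$. Hence the total count is $O((|ABC|x)^\varepsilon)$ after absorbing the logarithm into a slightly larger $\varepsilon$.

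The main subtlety is the indefinite case, where one must control the number of orbit representatives inside a prescribed range; however, the logarithmic dependence on the box size is absorbed by the $\varepsilon$ in the final exponent, giving the desired uniform bound in both regimes.
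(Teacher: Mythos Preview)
Your proposal is correct and follows essentially the same route as the paper: multiply through by $A$, interpret solutions as elements of norm $AC$ in the quadratic order $\mathbb{Z}[\sqrt{-AB}]$, bound the number of ideals by a divisor estimate, and in the real-quadratic case count unit-orbit representatives in the box by a logarithm. Two minor points you glossed over that the paper makes explicit: the degenerate case where $-AB$ is a perfect square (so the ``quadratic field'' is $\mathbb{Q}$ and the factorization is rational), and the uniform lower bound $\eta \ge (1+\sqrt{5})/2$ on the fundamental unit, which is needed so that your $O(\log(|A|x))$ is genuinely independent of $D$; also, the logarithm should really be $O(\log(|AB|x))$ since the archimedean size of $U+Y\sqrt{D'}$ involves $\sqrt{D'}$, but this is still absorbed into $(|ABCx|)^{\varepsilon}$.
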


\begin{proof} We first multiply the equation in question by $A$ to get
$$
(AX)^2+ABY^2=AC.
$$
Hence, it suffices to prove that there are at most $O(|ABCx|^{\varepsilon})$ solutions of the equation
$$
U^2+ABV^2=AC
$$
with $\max\{|U|,|V|\}\le |A|x$. We may write the above equation as 
$$
(U+V\sqrt{-AB})(U-V\sqrt{-AB})=AC.
$$ 
Let $\mathcal{O}_K$ be the ring of integers of the number field $K:=\mathbb{Q}(\sqrt{-AB})$. The number of divisors of the ideal $\mathfrak{C}=(AC)$ is $\ll \mathcal{N}_{K:\mathbb{Q}}(\mathfrak{C})^{\varepsilon}\le |AC|^{2\varepsilon}$. Hence, if $\mathfrak{A}$ is a principal ideal divisor of $\mathfrak{C}$, then it suffices to show that $\mathfrak{A}$ has at most $O(|ABCx|^{\varepsilon})$ generators of the form $U+V\sqrt{-AB}$ with $\max{|U|,|V|}\le |A|x$. Hence, we must show that if $U_0+V_0\sqrt{-AB}$ is such a generator, then there are at most $O(|ABCx|^{\varepsilon})$ units in $\mathcal{O}_K$ such that $u(U_0+V_0\sqrt{-AB})=U_1+V_1\sqrt{-AB}$, where 
$\max\{|U_1|,|V_1|\}\le |A|x$. This is trivial if $-AB$ is a square or $AB>0$ since then the number of units in $\mathcal{O}_K$ is bounded by 6.  Assume now that $AB<0$ is not a square and $-AB=st^2$, where $s$ is square-free. Then $K=\mathbb{Q}(\sqrt{s})$ is a real-quadratic field, and there is a fundamental unit of the form 
$$
\epsilon=a+b\sqrt{s}>1
$$ 
with $a,b\in \mathbb{Z}/2$, where $a,b\not=0$. Hence, $u=\pm \epsilon^k$ for some $k\in \mathbb{Z}$. Let $R:=|A|^{3/2}|B|^{1/2}|x|$. We observe that
$$
\frac{1}{R}\ll U_1+V_1\sqrt{-AB}\ll R
$$
since 
$$
|U_1+V_1\sqrt{-AB}|\cdot |U_1-V_1\sqrt{-AB}|=|AC|\ge 1.  
$$
Hence,
$$
\frac{1}{R}\ll \epsilon^k|U_0+V_0\sqrt{-AB}| \ll R,
$$
and the number of possible $k$'s is bounded by 
$$
\ll \log_{\epsilon} R^2\ll \log R \ll \log|2ABx|, 
$$
using the well-known fact that $\epsilon\ge (1+\sqrt{5})/2$ for every $s$. 
Thus, we get $O(\log|2ABx|)$ possible units $u$, which completes the proof. 
\end{proof}

\begin{Proposition}[Poisson summation formula] \label{Poisson} Let $\Phi : \mathbb{R}\rightarrow \mathbb{R}$ be a Schwartz class function, $\hat\Phi$ its Fourier transform. Then
$$
\sum\limits_{n\in \mathbb{Z}} \Phi(n)=\sum\limits_{n\in \mathbb{Z}} \hat\Phi(n).
$$
\end{Proposition}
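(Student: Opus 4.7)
The plan is to prove the identity by the classical periodization argument. First, I would form the periodization
$$
F(x) := \sum_{n \in \mathbb{Z}} \Phi(x+n),
$$
and observe that, because $\Phi$ is Schwartz, the series defining $F$ and all its term-by-term derivatives converge absolutely and uniformly on compact subsets of $\mathbb{R}$. Consequently $F$ is a smooth function on $\mathbb{R}$, and it is manifestly periodic with period $1$ since translating $x$ by an integer merely reindexes the sum.

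Next I would expand $F$ in a Fourier series on $[0,1]$. Its Fourier coefficients are
$$
c_k = \int_0^1 F(x)\, e^{-2\pi i k x}\, \dif x = \sum_{n\in \mathbb{Z}} \int_0^1 \Phi(x+n)\, e^{-2\pi i k x}\, \dif x = \int_{\mathbb{R}} \Phi(y)\, e^{-2\pi i k y}\, \dif y = \hat{\Phi}(k),
$$
where the interchange of sum and integral is justified by absolute convergence and the substitution $y = x+n$ uses $e^{-2\pi i k n}=1$. Because $F$ is smooth and periodic, and because $\hat{\Phi}(k)$ decays faster than any polynomial in $|k|$ (Schwartz property of $\Phi$ and hence of $\hat\Phi$), the Fourier series $\sum_{k\in \mathbb{Z}} \hat{\Phi}(k) e^{2\pi i k x}$ converges absolutely and uniformly to $F(x)$ for every $x \in \mathbb{R}$.

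Finally, evaluating this Fourier expansion at $x=0$ gives
$$
\sum_{n\in \mathbb{Z}} \Phi(n) = F(0) = \sum_{k\in \mathbb{Z}} \hat{\Phi}(k),
$$
which is the desired identity. The only technical point requiring care is the pointwise convergence of the Fourier series of $F$ at $x=0$; this is the main (mild) obstacle, and it is handled painlessly by the rapid decay of $\hat{\Phi}$ that follows from $\Phi$ being Schwartz. Alternatively, one could simply cite this as a standard textbook fact, since the Poisson summation formula for Schwartz functions is classical.
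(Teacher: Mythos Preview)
Your proof is correct and is exactly the classical periodization argument. The paper itself does not give a proof at all: it simply cites the result as standard (referring to \cite[section 3]{notes}), which is essentially the alternative you mention in your final sentence.
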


\begin{proof}
See \cite[section 3]{notes}. 
\end{proof}

\begin{Proposition}[Evaluation of exponential sums with rational functions] \label{Expsums}
Let $p>2$ be a prime, $n\ge 2$ be a natural number and $f=F_1/F_2$ be a rational function where $F_1,F_2\in \mathbb{Z}[x]$. For a polynomial $G$ over $\mathbb{Z}$, let $\mbox{ord}_p(G)$ be the largest power of $p$ dividing all of the coefficients of $G$, and for a rational function $g=G_1/G_2$ with $G_1$ and $G_2$ polynomials over $\mathbb{Z}$, let $\mbox{ord}_p(g) := \mbox{ord}_p(G_1)-\mbox{ord}_p(G_2)$. Set
$$
r:=\mbox{ord}_p(f'),
$$
and 
$$
S_{\alpha}(f;p^n):=\sum\limits_{\substack{x=1\\ x\equiv \alpha \bmod{p}}}^{p^n} e_{p^n}(f(x)), 
$$
where $\alpha\in \mathbb{Z}$.
Then we have the following if $r\le n-2$ and $(F_2(\alpha),p)=1$.\medskip\\
(i) If $p^{-r}f'(\alpha)\not\equiv 0\bmod{p}$, then $S_{\alpha}(f,p^n) = 0$.\medskip\\
(ii) If $\alpha$ is a root of $p^{-r}f'(x)\equiv 0\bmod{p}$ of multiplicity one, then
$$
S_{\alpha}(f;p^n) =\begin{cases} e\left(f(\alpha^{\ast})\right)p^{(n+r)/2} & \mbox{ if } n-r \mbox{ is even,}\\
e\left(f(\alpha^{\ast})\right)p^{(n+r)/2}\left(\frac{A(\alpha)}{p}\right)\cdot \frac{G_p}{\sqrt{p}} & \mbox{ if } n-r \mbox{ is odd,}
\end{cases}
$$
where $\alpha^{\ast}$ is the unique lifting of $\alpha$ to a solution of the congruence $p^{-r}f'(x) \equiv 0 \bmod p^{[(n-r+1)/2]}$ and 
$$
A(\alpha):=2p^{-r}f''(\alpha^{\ast}).
$$
\end{Proposition}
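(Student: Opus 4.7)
The plan is to follow Cochrane's $p$-adic stationary-phase method~\cite{CoZ}. The hypothesis $(F_2(\alpha),p)=1$ ensures that $f=F_1/F_2$ is a $p$-adically convergent power series on the residue disk $\{x:x\equiv\alpha\bmod p\}$, with Taylor coefficients $f^{(k)}(\alpha)/k!\in\mathbb{Z}_p$ satisfying $v_p(f^{(k)}(\alpha))\ge r$ for all $k\ge 1$ (obtained by writing $f'=p^r g$ with $\operatorname{ord}_p(g)=0$ and differentiating repeatedly, using that $F_2(\alpha)$ is a $p$-adic unit). My first step is to substitute $x=\alpha+py$ and develop the Taylor expansion of $f(\alpha+py)$, with the goal of truncating it to the quadratic part modulo $p^n$; the bound on higher Taylor terms uses $v_p(k!)\le(k-1)/(p-1)\le(k-1)/2$ for odd $p$ together with the assumption $r\le n-2$.

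For part~(i), I would apply the shift $x\mapsto x+p^{n-r-1}$, a legitimate permutation of the summation range since $n-r-1\ge 1$. The second-order Taylor expansion gives $f(x+p^{n-r-1})-f(x)\equiv p^{n-r-1}f'(x)\pmod{p^n}$, because the next Taylor term has valuation $\ge 2(n-r-1)+r\ge n$. Since $v_p(f'(x))=r$ throughout the residue disk, this shift multiplies each summand by the fixed root of unity $e_p(p^{-r}f'(\alpha))$, whence
$$
S_\alpha(f;p^n)=e_p\!\bigl(p^{-r}f'(\alpha)\bigr)\cdot S_\alpha(f;p^n);
$$
nontriviality of this root of unity forces $S_\alpha(f;p^n)=0$.

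For part~(ii), the simple-root assumption gives $p^{-r}f''(\alpha)\not\equiv 0\bmod p$, so Hensel's lemma produces a unique lift $\alpha^{*}$ of $\alpha$ modulo $p^{m}$, $m:=[(n-r+1)/2]$, solving $p^{-r}f'(x)\equiv 0\bmod p^{m}$. Changing variables $x=\alpha^{*}+py$ and expanding yields $v_p(f'(\alpha^{*}))\ge r+m$, so a completion of the square $y\mapsto y+c$ absorbs the residual linear term modulo $p^n$, and the sum collapses to
$$
S_\alpha(f;p^n)=e_{p^n}\!\bigl(f(\alpha^{*})\bigr)\sum_{y\,\bmod\,p^{n-1}}e_{p^{n-r-2}}\!\bigl(\tfrac{1}{2}A(\alpha)\,y^2\bigr),
$$
where $A(\alpha)=2p^{-r}f''(\alpha^{*})$ is coprime to $p$. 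Evaluating this classical Gauss sum by the identity $|G_p|=\sqrt{p}$ together with a linear substitution $y\mapsto y\,\overline{u}$ producing the Legendre symbol $\left(\frac{A(\alpha)}{p}\right)$ yields the stated values $p^{(n+r)/2}$ and $p^{(n+r)/2}\left(\frac{A(\alpha)}{p}\right)G_p/\sqrt{p}$, distinguished by the parity of $n-r$.

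The principal obstacle is the careful $p$-adic bookkeeping in Case~(ii): tracking the Hensel-lift precision against the parity of $n-r$, verifying that the linear term is killed modulo $p^n$ after the shift $y\mapsto y+c$, and controlling higher-order Taylor remainders for a rational (rather than polynomial) $f$. Once these details are handled cleanly by partitioning the $y$-sum according to $v_p(y)$ and applying the quadratic approximation piecewise, the resulting Gauss-sum evaluation is routine.
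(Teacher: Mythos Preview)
The paper does not prove this proposition at all: its entire proof reads ``This is \cite[Theorem 3.1(iii)]{CoZ}.'' Your sketch is precisely the $p$-adic stationary-phase argument that Cochrane and Zheng use in the cited paper, so in outline you are reconstructing the original source rather than diverging from it. The overall strategy---the shift $x\mapsto x+p^{n-r-1}$ in part~(i), and in part~(ii) the Hensel lift to $\alpha^{*}$, Taylor expansion about $\alpha^{*}$, completion of the square, and reduction to a quadratic Gauss sum---is sound and is exactly what \cite{CoZ} does.

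Two small remarks. First, your displayed formula for the reduced sum carries a factor $\tfrac12 A(\alpha)$ where a direct computation from $A(\alpha)=2p^{-r}f''(\alpha^{*})$ gives $\tfrac14 A(\alpha)$; this is a harmless arithmetic slip that does not affect the evaluation, since the Legendre symbol and Gauss-sum factor come out the same. Second, you correctly flag that the cubic and higher Taylor terms do \emph{not} in general vanish modulo $p^n$ (their valuation is only $\ge r+2$ for $k=3$), so the clean quadratic reduction you wrote down is not literally valid in one step; the actual argument in \cite{CoZ} proceeds by the inductive/layered decomposition you allude to at the end. Since you identify this as the main obstacle and indicate the right fix, your proposal is an accurate outline of the proof the paper merely cites.
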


\begin{proof} This is \cite[Theorem 3.1(iii)]{CoZ}.
\end{proof}

\section{Proof of Theorem \ref{mainresult}}

\subsection{Parametrization of solutions - Case I} 
In the following, we consider the case when $-\alpha_2\alpha_3$ is a quadratic residue modulo $p$.

The congruence in question resembles the equation 
\begin{equation} \label{conic}
\alpha_1 x_1^2+\alpha_2 x_2^2+\alpha_3x_3^2=0
\end{equation} 
of a conic in homogeneous coordinates. Let us first see that every solution $(x_1,x_2,x_3)\in \mathbb{Z}^3$ with $(x_1x_2x_3,p)=1$ of the congruence 
\begin{equation} \label{congru1}
\alpha_1 x_1^2+\alpha_2 x_2^2+\alpha_3x_3^2\equiv 0 \bmod{p^n}
\end{equation}
comes from a solution of \eqref{conic} in the $p$-adic integers. To this end, we need to use a Hensel-type argument. Let $(x_1,x_2,x_3)$ be such a solution. We would like to lift it to a solution $(\tilde{x}_1,\tilde{x}_2,\tilde{x}_3)$ of the congruence
\begin{equation} \label{congru2}
\alpha_1\tilde{x}_1^2+\alpha_2\tilde{x}_2^2+\alpha_3\tilde{x}_3^2\equiv 0\bmod{p^{n+1}}.
\end{equation} 
So we consider $\tilde{x}_i=x_i+k_ip^n$ with $k_i=0,...,p-1$, $i=1,2,3$ and satisfying
$$
\alpha_1(x_1+k_1p^n)^2+\alpha_2(x_2+k_2p^n)^2+\alpha_3(x_3+k_3p^n)^2\equiv 0 \bmod{p^{n+1}}.
$$ 
Expanding the squares and using $2n\ge n+1$, this is equivalent to 
$$
\alpha_1x_1^2+\alpha_2x_2^2+\alpha_3x_3^2+2\alpha_1k_1p^nx_1+2\alpha_2k_2p^nx_2+2\alpha_3k_3p^nx_3\equiv 0 \bmod{p^{n+1}},
$$
which in turn is equivalent to
$$
\frac{\alpha_1x_1^2+\alpha_2x_2^2+\alpha_3x_3^2}{p^n}+2\alpha_1x_1k_1+2\alpha_2x_2k_2+2\alpha_3x_3k_3\equiv 0\bmod{p}.
$$
This linear congruence in $k_1,k_2,k_ 3$ has exactly $p^2$ solutions. So in particular, a solution $(x_1,x_2,x_3)$ of \eqref{congru1} lifts to a solution $(\tilde{x}_1,\tilde{x}_2,\tilde{x}_3)$ of \eqref{congru2}. So indeed, every solution of \eqref{congru1} arises from a solution of \eqref{conic} in $\mathbb{Z}_p$. 

Now we parametrize these solutions. Since $-\alpha_2\alpha_3$ is a quadratic residue modulo $p$, there exists $b\in \mathbb{Q}_p$ such that $b^2=-\alpha_3/\alpha_2$. Then $(0,-b)$ is a point on the conic
\begin{equation} \label{conics}
\alpha_1z_1^2+\alpha_2z_2^2=-\alpha_3,
\end{equation}
and using Proposition \ref{para} with $(0,-b)$ in place of $(a,b)$, the $\mathbb{Q}_p$-rational points $(z_1,z_2)$ on this conic are parametrized as 
$$\left(z_1,z_2\right)=\left(-\frac{2tb\alpha_2}{\alpha_1+\alpha_2t^2},-\frac{b(\alpha_1-\alpha_2t^2)}{\alpha_1+\alpha_2 t^2}\right),
$$
where $t\in \mathbb{Q}_p$ with $t^2\not=-\alpha_1/\alpha_2$.    
Now if $(x_1,x_2,x_3)$ is a solution of \eqref{conic} in the $p$-adic integers, where $x_3\not=0$, then
$$
\left(\frac{x_1}{x_3},\frac{x_2}{x_3}\right)
$$
is a point on the conic in \eqref{conics}. Hence, we have 
$$
\left(\frac{x_1}{x_3},\frac{x_2}{x_3}\right)=\left(-\frac{2tb\alpha_2}{\alpha_1+\alpha_2t^2},-\frac{b(\alpha_1-\alpha_2t^2)}{\alpha_1+\alpha_2 t^2}\right).
$$
But we restricted ourselves to triples $(x_1,x_2,x_3)$ with $|x_i|_p=1$, $i=1,2,3$. Hence, we have 
$$
1=\left|\frac{x_1}{x_3}\right|_p=\left|\frac{2tb\alpha_2}{\alpha_1+\alpha_2t^2}\right|_p
$$
and 
$$
1=\left|\frac{x_2}{x_3}\right|_p=\left|\frac{b(\alpha_1-\alpha_2t^2)}{\alpha_1+\alpha_2 t^2}\right|_p.
$$
We claim that $|t|_p=1$. Indeed,
if $|t|_p<1$, then
$$
\left|\frac{2tb\alpha_2}{\alpha_1+\alpha_2t^2}\right|_p=|t|_p<1,
$$
a contradiction, and if $|t|_p>1$, then
$$
\left|\frac{2tb\alpha_2}{\alpha_1+\alpha_2t^2}\right|_p=|t|_p^{-1}<1.
$$
Hence $|t|_p=|\alpha_1+\alpha_2t^2|_p=|\alpha_1-\alpha_2t^2|_p=1$, and 
$$
x_1=-b(\alpha_1-\alpha_2t^2)u, \quad x_2=-2b\alpha_2tu, \quad x_3=(\alpha_1+\alpha_2t^2)u, 
$$
where $u$ is a unit in the ring of $p$-adic integers $\mathbb{Z}_p$, i.e. $|u|_p$=1. 

By reducing modulo $p^n$, we deduce that the solutions of the congruence
$$
\alpha_1y_1^2+\alpha_2y_2^2+\alpha_3\equiv 0 \bmod{p^n}
$$
with $(y_1y_2,p)=1$ are parametrized in the form
\begin{equation} \label{para1}
y_1=-\frac{b(\alpha_1-\alpha_2t^2)}{\alpha_1+\alpha_2t^2}, \quad y_2=-\frac{2b\alpha_2t}{\alpha_1+\alpha_2t^2}, \quad t \bmod{p^n},\ (t(\alpha_1-\alpha_2t^2)(\alpha_1+\alpha_2t^2),p)=1.
\end{equation} 
Here $1/(\alpha_1+\alpha_2t^2)$ stands for a multiplicative inverse of $\alpha_1+\alpha_2t^2 \bmod{p^n}$. Moreover, the pairs $(y_1,y_2)$ given as in \eqref{para1} are distinct modulo $p^n$ by the following argument: Suppose that 
\begin{equation} \label{t1t21}
\frac{b(\alpha_1-\alpha_2t_1^2)}{\alpha_1+\alpha_2t_1^2}\equiv \frac{b(\alpha_1-\alpha_2t_2^2)}{\alpha_1+\alpha_2t_2^2} \bmod{p^n}
\end{equation}
and 
\begin{equation} \label{t1t22}
\frac{2b\alpha_2t_1}{\alpha_1+\alpha_2t_1^2}\equiv \frac{2b\alpha_2t_2}{\alpha_1+\alpha_2t_2^2} \bmod{p^n}.
\end{equation}
Then from \eqref{t1t21} it follows upon multiplying both sides with the denominators that 
$$
b(\alpha_1^2-\alpha_2^2t_1^2t_2^2+\alpha_1\alpha_2t_2^2-\alpha_1\alpha_2t_1^2)\equiv b(\alpha_1^2-\alpha_2^2t_1^2t_2^2-\alpha_1\alpha_2t_2^2+\alpha_1\alpha_2t_1^2)\bmod{p^{n}}
$$
and hence 
$$
t_1^2\equiv t_2^2\bmod{p^n}.
$$
So if $t_1\not\equiv t_2\bmod{p^n}$, then $t_1\equiv -t_2\bmod{p^n}$. However, in this case, \eqref{t1t22} implies $t_1\equiv t_2\equiv 0\bmod{p^n}$ contradicting the assumption that $(t_i,p)=1$ for $i=1,2$. 

\subsection{Double Poisson summation}\label{fdob} We start by writing 
\begin{equation*}
\begin{split}
T= & \sum\limits_{\substack{(x_1,x_2,x_3)\in \mathbb{Z}^3\\ (x_1x_2x_3,p)=1\\ \alpha_1x_1^2+\alpha_2x_2^2+\alpha_3x_3^2\equiv 0 \bmod{p^n}}} \Phi\left(\frac{x_1}{N}\right)\Phi\left(\frac{x_2}{N}\right)\Phi\left(\frac{x_3}{N}\right)\\
=& \sum\limits_{(x_3,p)=1} \Phi\left(\frac{x_3}{N}\right)\sum\limits_{\substack{y_1,y_2\bmod{p^n}\\ (y_1y_2,p)=1 \\ \alpha_1y_1^2+\alpha_2y_2^2+\alpha_3\equiv 0\bmod{p^n}}} \sum\limits_{\substack{x_1\equiv x_3y_1\bmod{p^n}\\ x_2\equiv x_3y_2\bmod{p^n}}} \Phi\left(\frac{x_1}{N}\right)\Phi\left(\frac{x_2}{N}\right).
\end{split}
\end{equation*}
Now we apply Poisson summation, Proposition \ref{Poisson}, after a linear change of variables to the inner double sum over $x_1$ and $x_2$, obtaining
\begin{equation*}
T= \frac{N^2}{p^{2n}}\sum\limits_{(x_3,p)=1} \Phi\left(\frac{x_3}{N}\right)\sum\limits_{(k_1,k_2)\in \mathbb{Z}^2} \hat{\Phi}\left(\frac{k_1N}{p^n}\right)\hat{\Phi}\left(\frac{k_2N}{p^n}\right)\sum\limits_{\substack{y_1,y_2\bmod{p^n}\\ (y_1y_2,p)=1 \\ \alpha_1y_1^2+\alpha_2y_2^2+\alpha_3\equiv 0\bmod{p^n}}} e_{p^n}\left(k_2x_3y_1+k_1x_3y_2\right).
\end{equation*}
Using our parametrization \eqref{para1}, we deduce that
\begin{equation*}
T= \frac{N^2}{p^{2n}}\sum\limits_{(x_3,p)=1} \Phi\left(\frac{x_3}{N}\right)\sum\limits_{(k_1,k_2)\in \mathbb{Z}^2} \hat{\Phi}\left(\frac{k_1N}{p^n}\right)\hat{\Phi}\left(\frac{k_2N}{p^n}\right)\sum\limits_{\substack{t \bmod{p^n}\\
(t(\alpha_1-\alpha_2t^2)(\alpha_1+\alpha_2t^2),p)=1 }} e_{p^n}\left(x_3\cdot \frac{2k_1b\alpha_2t+k_2b(\alpha_1-\alpha_2t^2)}{\alpha_1+\alpha_2t^2}\right).
\end{equation*}

We decompose $T$ into 
\begin{equation} \label{divide}
T=T_0+U,
\end{equation}
where $T_0$ is the main term contribution of $(k_1,k_2)=(0,0)$. It follows that
\begin{equation*}
T_0= \hat{\Phi}(0)^2\cdot \frac{N^2}{p^{2n}}\sum\limits_{(x_3,p)=1} \Phi\left(\frac{x_3}{N}\right) \cdot p^{n-1}(p-s_p(\alpha_1,\alpha_2,\alpha_3)),
\end{equation*}
where $s_p(\alpha_1,\alpha_2,\alpha_3))$ is defined as in \eqref{sdef}. To see this, we note that the congruence $(\alpha_1-\alpha_2t^2)(\alpha_1+\alpha_2t^2)\equiv 0 \bmod{p}$ has four solutions modulo $p$ if 
$$
\left(\frac{-\alpha_1\alpha_2}{p}\right)=1 \quad \mbox{and} \quad  
\left(\frac{\alpha_1\alpha_2}{p}\right)=1,
$$ 
two solution if these Legendre symbols have opposite signs and no solution if they are both equal to $-1$. Moreover, since we assumed that 
$$
\left(\frac{-\alpha_2\alpha_3}{p}\right)=1,
$$ 
we have 
$$
\left(\frac{\alpha_1\alpha_2}{p}\right)=\left(\frac{-\alpha_1\alpha_3}{p}\right).
$$
In each case, the congruence 
$t(\alpha_1-\alpha_2t^2)(\alpha_1+\alpha_2t^2)\equiv 0 \bmod{p}$
has precisely $s_p(\alpha_1,\alpha_2,\alpha_3)$ solutions. 

If $N\ge p^{n\varepsilon}$ for any fixed $\varepsilon>0$, then the term $T_0$ can be simplified into
\begin{equation} \label{T0I}
\begin{split}
T_0= & \hat{\Phi}(0)^2 \cdot \frac{p-s_p(\alpha_1,\alpha_2,\alpha_3)}{p}\cdot \frac{N^2}{p^n} \cdot \left(\sum\limits_{x} \Phi\left(\frac{x}{N}\right) -\sum\limits_{x} \Phi\left(\frac{x}{N/p}\right)\right)\\
= & \hat{\Phi}(0)^2 \cdot \frac{p-s_p(\alpha_1,\alpha_2,\alpha_3)}{p}\cdot \frac{N^2}{p^n}\cdot \left(N\cdot \frac{p-1}{p}\cdot \hat\Phi(0)
+\sum\limits_{y\in \mathbb{Z}\setminus\{0\}} \left(N\hat\Phi(Ny)-\frac{N}{p}\cdot \hat\Phi\left(\frac{Ny}{p}\right)\right)\right)\\
= & \hat{\Phi}(0)^3 \cdot \frac{(p-s_p(\alpha_1,\alpha_2,\alpha_3))(p-1)}{p^2}\cdot \frac{N^3}{p^{n}}\cdot\left(1+o(1)\right)=\hat{\Phi}(0)^3 \cdot C_p(\alpha_1,\alpha_2,\alpha_3)\cdot \frac{N^3}{p^{n}}\cdot\left(1+o(1)\right)
\end{split}
\end{equation}
as $n\rightarrow\infty$, 
where we again use Poisson summation for the sums over $x$ above and the rapid decay of $\hat\Phi$. 

\subsection{Evaluation of exponential sums} Now we look at the error contribution
\begin{equation} \label{errorcont}
U= \frac{N^2}{p^{2n}}\sum\limits_{(x_3,p)=1} \Phi\left(\frac{x_3}{N}\right)\sum\limits_{(k_1,k_2)\in \mathbb{Z}^2\setminus \{(0,0)\}} \hat{\Phi}\left(\frac{k_1N}{p^n}\right)\hat{\Phi}\left(\frac{k_2N}{p^n}\right) E\left(k_1,k_2,x_3;p^n\right)
\end{equation}
with 
\begin{equation*}
E\left(k_1,k_2,x_3;p^n\right):=\sum\limits_{\substack{t \bmod{p^n}\\
(t(\alpha_1-\alpha_2t^2)(\alpha_1+\alpha_2t^2),p)=1 }} e_{p^n}\left(x_3\cdot \frac{2k_1b\alpha_2t+k_2b(\alpha_1-\alpha_2t^2)}{\alpha_1+\alpha_2t^2}\right).
\end{equation*}
Assume that
$$
(k_1,k_2,p^n)=p^r.
$$
The contribution of $r=n-1,n$ to the right-hand side of \eqref{errorcont} is $O_{\varepsilon}(1)$ if $N\ge p^{n\varepsilon}$ by the rapid decay of $\hat\Phi$ since $(k_1,k_2)=(0,0)$ is excluded from the summation. In the following, we assume that $r\le n-2$ so that Proposition \ref{Expsums} is applicable.

We split the inner sum over $t$ into 
\begin{equation*} \label{split}
E\left(k_1,k_2,x_3;p^n\right)=\sum\limits_{\substack{\alpha=1\\ \alpha \not\equiv 0 \bmod{p}\\\alpha^2\not\equiv \pm \alpha_1\overline{\alpha_2}\bmod{p}}}^p S_{\alpha}\left(f_{k_1,k_2};p^n\right),
\end{equation*}
where 
\begin{equation*} \label{Salphadef}
S_{\alpha}\left(f_{k_1,k_2};p^n\right)=\sum\limits_{\substack{t \bmod{p^n}\\ t\equiv \alpha\bmod{p}}} e_{p^n}\left(f_{k_1,k_2}(t)\right)
\end{equation*}
with 
\begin{equation*} \label{fdef}
f_{k_1,k_2}(t):=x_3\cdot \frac{2k_1b\alpha_2t+k_2b(\alpha_1-\alpha_2t^2)}{\alpha_1+\alpha_2t^2}.
\end{equation*}
Here, for convenience, we have suppressed the dependency on $x_3$ in our notations of $S_{\alpha}\left(f_{k_1,k_2};p^n\right)$ and $f_{k_1,k_2}(t)$. 
We calculate that 
\begin{equation*}
\begin{split}
f_{k_1,k_2,x_3}'(t)
= & 2x_3b\alpha_2\cdot \frac{k_1(\alpha_1-\alpha_2t^2)-2k_2\alpha_1t}{(\alpha_1+\alpha_2t^2)^2}.
\end{split}
\end{equation*}
Set
$$
l_1:=\frac{k_1}{p^r}, \quad l_2:=\frac{k_2}{p^r}.
$$
Then using Proposition \ref{Expsums}, if $\alpha_2\alpha^2+\alpha_1\not\equiv 0\bmod{p}$, we have $S_{\alpha}\left(f_{k_1,k_2,x_3};p^n\right)=0$ unless 
\begin{equation} \label{keycong}
2\alpha_1l_2\alpha\equiv l_1(\alpha_1-\alpha_2\alpha^2) \bmod{p}.
\end{equation}
If $\alpha^2\not\equiv 0,\alpha_1\overline{\alpha_2}\bmod{p}$, then it follows that $(l_1l_2,p)=1$ and 
$$
(k_1,p^n)=p^r=(k_2,p^n).
$$
In summary, we have 
\begin{equation*}
U= \frac{N^2}{p^{2n}}\sum\limits_{(x_3,p)=1} \Phi\left(\frac{x_3}{N}\right)\sum\limits_{r=0}^{n-2} \sum\limits_{\substack{\alpha=1\\ \alpha^2 \not\equiv 0,\pm \alpha_1\overline{\alpha_2}
\bmod{p}}}^p \sum\limits_{\substack{(l_1,l_2)\in \mathbb{Z}^2\\ (l_1l_2,p)=1\\ 2\alpha_1l_2\alpha\equiv l_1(\alpha_1-\alpha_2\alpha^2) \bmod{p}}} \hat{\Phi}\left(\frac{l_1N}{p^{n-r}}\right)\hat{\Phi}\left(\frac{l_2N}{p^{n-r}}\right) S_{\alpha}\left(f_{p^rl_1,p^rl_2};p^n\right)+O_{\varepsilon}(1)
\end{equation*}
if $N\ge p^{n\varepsilon}$. 

Let $D:=\alpha_1\alpha_2l_1^2+\alpha_1^2l_2^2$. The congruence \eqref{keycong} has a double root $\alpha \bmod{p}$ iff $D\equiv 0 \bmod{p}$, and in this case we get $\alpha^2\equiv -\alpha_1\overline{\alpha_2}\bmod{p}$ which is excluded from the summation over $\alpha$. Hence, only the case $D\not\equiv 0\bmod{p}$ occurs in which we have no root if $D$ is a quadratic non-residue modulo $p$ and two roots of multiplicity one if $D$ is a quadratic residue modulo $p$. Therefore, we may assume from now on that $D\not\equiv 0 \bmod{p}$ and $D$ is a quadratic residue modulo $p$. Then using Proposition \ref{Expsums}, if $\alpha$ satisfies \eqref{keycong}, we obtain
$$
S_{\alpha}\left(f_{p^rl_1,p^rl_2};p^n\right)=
\begin{cases}
e_{p^n}\left(f_{p^rl_1,p^rl_2}(\alpha^{\ast})\right)\cdot p^{(n+r)/2} & \mbox{ if } n-r \mbox{ is even,}\\
e_{p^n}\left(f_{p^rl_1,p^rl_2}(\alpha^{\ast})\right)\cdot \left(\frac{A(\alpha)}{p}\right)\cdot \frac{G_p}{\sqrt{p}}\cdot p^{(n+r)/2} & \mbox{ if } n-r \mbox{ is odd,}
\end{cases}
$$
where $\alpha^{\ast}$ is the unique lifting of $\alpha$ to a root of the congruence
$$
2\alpha_1l_2\alpha^{\ast}\equiv l_1(\alpha_1-\alpha_2(\alpha^{\ast})^2) \bmod{p^{n-r}}
$$
and 
$$
A(\alpha)=\frac{2 f_{p^rl_1,p^rl_2}''(\alpha)}{p^r}.
$$
We calculate that 
$$
\alpha^{\ast}\equiv (-\alpha_1l_2\pm\sqrt{D})\overline{\alpha_2l_1} \bmod{p^{n-r}},
$$
where $\sqrt{D}$ denotes one of the two roots of the congruence
$$
x^2\equiv D\bmod{p^{n-r}}.
$$
A short calculation gives
$$
e_{p^n}\left(f_{p^rl_1,p^rl_2}(\alpha^{\ast})\right)=e_{p^{n-r}}\left(\pm bx_3\overline{\alpha_1} \sqrt{D}\right).
$$
Further, we calculate the second derivative of $f_{p^rl_1,p^rl_2}$ to be
$$
f_{p^rl_1,p^rl_2}''(t)=4\alpha_1\alpha_2bx_3\cdot \frac{-2\alpha_2l_1t-l_2(\alpha_1-\alpha_2t^2)}{(\alpha_1+\alpha_2t^2)^3}\cdot p^{r}.
$$
Another short calculation gives 
$$
A(\alpha)=-\frac{2bx_3(\alpha_2l_1)^2}{\alpha_2\alpha^2\sqrt{D}}
$$
and hence 
$$
\left(\frac{A(\alpha)}{p}\right)=\left(\frac{-2bx_3\alpha_2\sqrt{D}}{p}\right). 
$$

As noted above, the cases $\alpha^2\not\equiv 0,\pm \alpha_1\overline{\alpha_2}\bmod{p}$ cannot occur if $\alpha$ is a multiple root of the congruence \eqref{keycong} with $(l_1l_2,p)=1$. So altogether, we obtain
\begin{equation} \label{Uaftereva}
\begin{split}
U= & \frac{N^2}{p^{3n/2}}\sum\limits_{r=0}^{n-2} p^{r/2} \sum\limits_{\substack{(l_1l_2,p)=1\\ D=\Box\bmod{p}}} \hat{\Phi}\left(\frac{l_1N}{p^{n-r}}\right)\hat{\Phi}\left(\frac{l_2N}{p^{n-r}}\right)\times \\ & \sum\limits_{(x_3,p)=1} \Phi\left(\frac{x_3}{N}\right)\cdot C_{n-r}(x_3,D)\cdot 
\left(e_{p^{n-r}}\left(bx_3\overline{\alpha_1}\sqrt{D}\right)+ e_{p^{n-r}}\left(-bx_3\overline{\alpha_1}\sqrt{D}\right)\right)+O_{\varepsilon}(1),
\end{split}
\end{equation}
where $D=\Box\bmod{p}$ means that $D$ is a quadratic residue modulo $p$ and 
$$
C_{n-r}(x_3,D):=\begin{cases} 1 & \mbox{ if } n-r \mbox{ is even,}\\ 
\left(\frac{-2bx_3\alpha_2\sqrt{D}}{p}\right)\cdot \frac{G_p}{\sqrt{p}} & \mbox { if } n-r
\mbox{ is odd.} \end{cases}
$$
If $(x_3D,p)=1$, then
$$
C_{n-r}(x_3,D)= \frac{G_{p^{n-r}}}{p^{(n-r)/2}}\cdot 
\left(\frac{-2bx_3\alpha_2\sqrt{D}}{p^{n-r}}\right)
$$
in each of the two cases above. Therefore, $U$ can be more compactly written as 
\begin{equation} \label{compact}
\begin{split}
U= & \frac{N^2}{p^{3n/2}}\sum\limits_{r=0}^{n-2} p^{r/2} \cdot \frac{G_{p^{n-r}}}{p^{(n-r)/2}} \cdot \sum\limits_{\substack{D=1\\ D\equiv \Box \bmod{p}}}^{\infty} F_{n-r}(D) \times\\
 & \sum\limits_{x_3\in \mathbb{Z}} \Phi\left(\frac{x_3}{N}\right)\cdot \left(\frac{x_3}{p^{n-r}}\right) \cdot 
\left(e_{p^{n-r}}\left(bx_3\overline{\alpha_1}\sqrt{D}\right)+ e_{p^{n-r}}\left(-bx_3\overline{\alpha_1}\sqrt{D}\right)\right)+O_{\varepsilon}(1),
\end{split}
\end{equation}
where 
\begin{equation} \label{FD}
F_{n-r}(D):= \left(\frac{-2b\alpha_2\sqrt{D}}{p^{n-r}}\right)\cdot \sum\limits_{\substack{(l_1l_2,p)=1\\ \alpha_1\alpha_2l_1^2+\alpha_1^2l_2^2=D}} \hat{\Phi}\left(\frac{l_1N}{p^{n-r}}\right)\hat{\Phi}\left(\frac{l_2N}{p^{n-r}}\right).
\end{equation}

\subsection{Single Poisson summation and final count} 
Now we split the sum over $x_3$ in \eqref{compact} into sub-sums over residue classes modulo $p$ and perform Poisson summation, getting
\begin{equation*}
\begin{split}
\sum\limits_{x_3\in \mathbb{N}} \Phi\left(\frac{x_3}{N}\right)\cdot \left(\frac{x_3}{p^{n-r}}\right) \cdot e_{p^{n-r}}\left(\pm bx_3\overline{\alpha_1}\sqrt{D}\right) = 
& \sum\limits_{u=1}^{p} \left(\frac{u}{p^{n-r}}\right)
\sum\limits_{x_3\equiv u\bmod{p}} \Phi\left(\frac{x_3}{N}\right)\cdot
e_{p^{n-r}}\left(b x_3\overline{\alpha_1}\sqrt{D}\right)\\
= & \frac{N}{p} \sum\limits_{v\in \mathbb{Z}} \left(\sum\limits_{u=1}^{p} \left(\frac{u}{p^{n-r}}\right) \cdot e_{p}(uv)\right) 
\cdot \hat\Phi\left(\frac{N}{p}\left(\frac{\pm b\overline{\alpha_1}\sqrt{D}}{p^{n-r-1}}-v\right)\right).
\end{split}
\end{equation*}
Using the rapid decay of $\hat\Phi$, the above is $O(N)$ if $||b\overline{\alpha_1}\sqrt{D}/p^{n-r-1}||\le p^{1+n\varepsilon}N^{-1}$ and negligible otherwise, provided $n$ is large enough. We may constraint $b\overline{\alpha_1}\sqrt{D}$ to the range $0\le \sqrt{D}\le p^{n-r}$ and then write $b\overline{\alpha_1}\sqrt{D}=wp^{n-r-1}+l_3$, where $w=0,...,p-1$, $(l_3,p)=1$ and $|l_3|\le L_r$ with
$$
L_r:=p^{n-r+n\varepsilon}N^{-1}.
$$ 
Moreover, the summations over $l_1$ and $l_2$ in \eqref{FD} can be cut off at $|l_1|,|l_2|\le L_r$ at the cost of a negligible error if $n$ is large enough. It follows that
 \begin{equation*}
\begin{split}
U\ll & \frac{N^3}{p^{3n/2}}\sum\limits_{r=0}^{n-2} p^{r/2} \sum\limits_{w=0}^{p-1} \sum\limits_{\substack{(l_1,l_2,l_3)\in \mathbb{Z}^3\\ (l_1l_2l_3,p)=1\\ |l_1|,|l_2|,|l_3|\le L_r\\ b^2\overline{\alpha_1}^{2}(\alpha_1\alpha_2l_1^2+\alpha_1^{2}l_2^2)\equiv (wp^{n-r-1}+l_3)^2\bmod{p^{n-r}}}} 1 +O_{\varepsilon}(1).
\end{split}
\end{equation*}
Recalling that $b^2\equiv -\alpha_3\overline{\alpha_2} \bmod{p^{n-r}}$, the congruence above implies 
\begin{equation} \label{acongru}
\alpha_2\alpha_3l_1^2+\alpha_1\alpha_3l_2^2+\alpha_1\alpha_2l_3^2\equiv 0\bmod{p^{n-r-1}},
\end{equation}
and hence 
 \begin{equation} \label{UboundI} 
\begin{split}
U\ll & \frac{N^3}{p^{3n/2}}\sum\limits_{r=0}^{n-2} p^{r/2}  \sum\limits_{\substack{(l_1,l_2,l_3)\in \mathbb{Z}^3\\ (l_1l_2l_3,p)=1\\ |l_1|,|l_2|,|l_3|\le L_r\\ \alpha_2\alpha_3l_1^2+\alpha_1\alpha_3l_2^2+\alpha_1\alpha_2l_3^2\equiv 0\bmod{p^{n-r-1}}}} 1 +O_{\varepsilon}(1).
\end{split}
\end{equation}
Let $H:=\max\{|\alpha_1|,|\alpha_2|,|\alpha_3|\}$.
Now if $3H^2L_r^2< p^{n-r-1}$, then the congruence above can be replaced by the equation 
$$
\alpha_2\alpha_3l_1^2+\alpha_1\alpha_3l_2^2+\alpha_1\alpha_2l_3^2=0.
$$ 
Certainly, this is the case if $N\ge p^{n/2+2n\varepsilon}$ and $n$ is large enough. (At this place, we use our assumption that $\alpha_1,\alpha_2,\alpha_3$ are {\it fixed}!) Hence, in this case, we have
\begin{equation*} 
\begin{split}
U\ll & \frac{N^3}{p^{3n/2}}\sum\limits_{r=0}^{n-2} p^{r/2} \sum\limits_{\substack{(l_1,l_2,l_3)\in \mathbb{Z}^3\\ (l_1l_2l_3,p)=1\\ |l_1|,|l_2|,|l_3|\le L_r\\ \alpha_2\alpha_3l_1^2+\alpha_1\alpha_3l_2^2+\alpha_1\alpha_2l_3^2=0}} 1 +O_{\varepsilon}(1).
\end{split}
\end{equation*}

Now we apply Proposition \ref{quadequations} with $A=\alpha_2\alpha_3$, $B=\alpha_1\alpha_3$, $C=-\alpha_1\alpha_2l_3^2$, $X=l_1$ and $Y=l_2$ to bound $U$ by 
\begin{equation*} 
\begin{split}
U\ll & \frac{N^3}{p^{3n/2}}\sum\limits_{r=0}^{n-2} p^{r/2} L_r^{1+\varepsilon} \ll \frac{N^2}{p^{n/2}}\cdot p^{9n\varepsilon}.
\end{split}
\end{equation*}
This needs to be compared to the main term which is of size
$$
T_0\asymp \frac{N^3}{p^n}.
$$
If $N\ge p^{(1/2+10\varepsilon)n}$, then $U=o(T_0)$, which completes the proof of Theorem \ref{mainresult} in this case. 
 
\subsection{Parametrization of solutions - Cases II} \label{2ndpara}
Now we consider the case when none of $-\alpha_i\alpha_j$ is a quadratic residue modulo $p$. By Proposition \ref{para}, the $\mathbb{Q}_p$-rational points $(y_1,y_2)$ on the conic 
$$
\alpha_1y_1^2+\alpha_2y_2^2=-\alpha_3,
$$
are parametrized as  
\begin{equation*}
\begin{split}
y_1=y_1(t)&:=a-2\alpha_2\frac{at^2-bt}{\alpha_1+\alpha_2t^2},\\
y_2=y_2(t)&:=-b-2\alpha_1\frac{at-b}{\alpha_1+\alpha_2t^2},
\end{split}
\end{equation*}
where $\alpha_1a^2+\alpha_2b^2=-\alpha_3$ (in particular, $\alpha_1a^2+\alpha_2b^2\equiv -\alpha_3\bmod{p^n}$). This equation is soluble in $(a,b)$ as a consequence of .... 
We define 
$$
M_0:=\{(y_1(t),y_2(t)) : t=1,2,...,p^n\},
$$
$$
M_s:=\left\{\left(y_1\left(\frac{t}{p^s}\right),y_2\left(\frac{t}{p^s}\right)\right):t=1,...p^{n-s},t\not\equiv0~~\text{mod}~~ p\right\} \mbox{ for } s=1,2...,n
$$
and 
$$
M:=\bigcup\limits_{s=0}^n M_s.
$$
Noting that $y_1(t/p^s)$ and $y_2(t/p^s)$ are $p$-adic integers, we will view $y_1(t/p^s)$ and $y_2(t/p^s)$ as elements of $\mathbb{Z}/p^n\mathbb{Z}$. 

It can be seen that the pairs $(y_1(t/p^s),y_2(t/p^s)$ in the above sets $M_s$ ($s=0,...,n$) are distinct and $M_i\cap M_j=\emptyset$ for $i\neq j$ by the following argument.  
Suppose that $$
\left(y_1\left(\frac{t_1}{p^s_1}\right),y_2\left(\frac{t_1}{p^s_1}\right)\right)= \left(y_1\left(\frac{t_2}{p^s_2}\right),y_2\left(\frac{t_2}{p^s_2}\right)\right),
$$
which implies 
\begin{equation*}
\begin{split}
a-2\alpha_2\frac{at_1^2-bt_1p^{s_1}}{\alpha_1p^{2s_1}+\alpha_2t_1^2}=&a-2\alpha_2\frac{at_2^2-bt_2p^{s_2}}{\alpha_1p^{2s_2}+\alpha_2t_2^2},\\
-b-2\alpha_1p^{s_1}\frac{at_1-bp^{s_1}}{\alpha_1p^{2s_1}+\alpha_2t_1^2}=&-b-2\alpha_1p^{s_2}\frac{at_2-bp^{s_2}}{\alpha_1p^{2s_1}+\alpha_2t_2^2}.
\end{split}
\end{equation*}
Then a short calculation gives
\begin{equation} \label{ok} t_2p^{s_1}=t_1p^{s_2}\bmod{p^n}.
\end{equation}
So if $0\leq s_1=s_2\leq n$ then $p^{s_1}(t_2-t_1)\equiv 0\bmod{p^n}$, which implies $t_2-t_1\equiv0\bmod{p^{n-s_1}}$. Hence $t_2=t_1$ because $1\leq t_1,t_2\leq p^{n-s_1}$.
If $s_2>s_1$ then we have $t_2\equiv 0 \bmod {p^{s_2-s_1}}$, which contradicts the fact that $t_2\not\equiv 0\bmod{p}$.

Therefore we get that $|M_0|=p^n,|M_n|=1$, $|M_s|=p^{n-s}-p^{n-s-1}$ for $s=1,2...n-1$, and $$|M|=\left|\bigcup_{s=0}^nM_s\right|=p^n+(p^{n-1}-p^{n-2})+....+(p^2-p)+(p-1)+1=p^n+p^{n-1}.$$
Now Proposition \ref{nofroot} tells us that $M=\bigcup_{s=0}^nM_n$ is a complete set of solutions of $\alpha_1y_1^2+\alpha_2y_2^2\equiv-\alpha_3$ mod $p^n$.
We also observe that if $(x_1,x_2)$ is a solution of $\alpha_1x_1^2+\alpha_2x_2^2\equiv-\alpha_3$ mod $p^n$ then $(x_1x_2,p)=1$ because of the fact that $-\alpha_1\alpha_2$ is a quadratic non-residue.

\subsection{Double Poisson summation} As in \eqref{fdob} we write
\begin{equation*}
T= \frac{N^2}{p^{2n}}\sum\limits_{(x_3,p)=1} \Phi\left(\frac{x_3}{N}\right)\sum\limits_{(k_1,k_2)\in \mathbb{Z}^2} \hat{\Phi}\left(\frac{k_1N}{p^n}\right)\hat{\Phi}\left(\frac{k_2N}{p^n}\right)\sum\limits_{\substack{y_1,y_2\bmod{p^n}\\ (y_1y_2,p)=1 \\ \alpha_1y_1^2+\alpha_2y_2^2+\alpha_3\equiv 0\bmod{p^n}}} e_{p^n}\left(k_1x_3y_1+k_2x_3y_2\right).
\end{equation*}
Using the above parametrization, we deduce that 
\begin{equation*}
T= \frac{N^2}{p^{2n}}\sum\limits_{(x_3,p)=1} \Phi\left(\frac{x_3}{N}\right)\sum\limits_{(k_1,k_2)\in \mathbb{Z}^2} \hat{\Phi}\left(\frac{k_1N}{p^n}\right)\hat{\Phi}\left(\frac{k_2N}{p^n}\right)\sum_{s=0}^n\sum\limits_{(y_1,y_2)\in M_s} e_{p^n}\left(k_1x_3y_1+k_2x_3y_2\right).
\end{equation*}
We decompose $T$ into 
\begin{equation*} \label{2divide}
T=T_0+U,
\end{equation*}
where $T_0$ is the main term contribution of $(k_1,k_2)=(0,0)$. Hence,
\begin{equation*}
\begin{split}
T_0&= \hat{\Phi}(0)^2\cdot \frac{N^2}{p^{2n}}\sum\limits_{(x_3,p)=1} \Phi\left(\frac{x_3}{N}\right) \cdot\left|M\right|\\
&= \hat{\Phi}(0)^2\cdot \frac{N^2}{p^{2n}}\sum\limits_{(x_3,p)=1} \Phi\left(\frac{x_3}{N}\right) \cdot(p^n+p^{n-1})\\
 &=\hat{\Phi}(0)^2\cdot \frac{N^2}{p^{2n}}\sum\limits_{(x_3,p)=1} \Phi\left(\frac{x_3}{N}\right) \cdot p^{n-1}(p-s_p(\alpha_1,\alpha_2,\alpha_3)),
\end{split}
\end{equation*}
where $s_p(\alpha_1,\alpha_2,\alpha_3))$ is defined as in \eqref{sdef}. Now if $N\geq p^{n\varepsilon}$ for any fixed $\varepsilon>0$, the term $T_0$ can be further simplified as in \eqref{fdob}, and we obtain
\begin{equation*} \label{T0II}
\begin{split}
T_0= \hat{\Phi}(0)^3 \cdot \frac{(p-s_p(\alpha_1,\alpha_2,\alpha_3))(p-1)}{p^2}\cdot \frac{N^3}{p^{n}}\cdot\left(1+o(1)\right)= \hat{\Phi}(0)^3\cdot C_p(\alpha_1,\alpha_2,\alpha_3)\cdot \frac{N^3}{p^n}\cdot \left(1+o(1)\right). 
\end{split}
\end{equation*}

\subsection{Evaluation of exponential sums} Now we look at the error contribution
\begin{equation} \label{2errorcont}
U= \frac{N^2}{p^{2n}}\sum\limits_{(x_3,p)=1} \Phi\left(\frac{x_3}{N}\right)\sum\limits_{(k_1,k_2)\in \mathbb{Z}^2\setminus \{(0,0)\}} \hat{\Phi}\left(\frac{k_1N}{p^n}\right)\hat{\Phi}\left(\frac{k_2N}{p^n}\right) E\left(k_1,k_2,x_3;p^n\right)
\end{equation}
with 
\begin{equation*}
\begin{split}
E\left(k_1,k_2,x_3;p^n\right):=\sum_{s=0}^n\sum\limits_{(y_1,y_2)\in M_s} e_{p^n}\left(x_3(k_1y_1+k_2y_2)\right).
\end{split}
\end{equation*}
Assume that
$$
(k_1,k_2,p^n)=p^r.
$$
The contribution of $r=n-1,n$ to the right-hand side of \eqref{2errorcont} is $O_{\varepsilon}(1)$ if $N\ge p^{n\varepsilon}$ by the rapid decay of $\hat\Phi$ since $(k_1,k_2)=(0,0)$ is excluded from the summation. In the following, we assume that $r\le n-2$ so that Proposition \ref{Expsums} is applicable.

Let $$f_{s,k_1,k_2}(t)=x_3\left(k_1y_1\left(\frac{t}{p^s}\right)+k_2y_2\left(\frac{t}{p^s}\right)\right),$$ where $y_1(t),y_2(t)$ is defined in \eqref{2ndpara}.
We have $f_{s,k_1,k_2}(t)\equiv f_{s,k_1,k_2}(t+wt^{n-s})\bmod{p^n}$ for $w=1,2...,p^{s}$
and  deduce that 
\begin{equation}\label{newsum}
\begin{split}
E\left(k_1,k_2,x_3;p^n\right):=\sum_{t=0}^{p^n}e_{p^n}\left(f_{0,k_1,k_2}(t)\right)+\sum_{s=1}^n\frac{1}{p^s}\sum\limits_{\substack{t=1\\t\not\equiv 0~~\text{mod}~~p}}^{p^n} e_{p^n}\left( f_{s,k_1,k_2}(t)\right).
\end{split}
\end{equation}
The derivative of the amplitude function turns out to be
\begin{equation*}
\begin{split}
f'_{s,k_1,k_2}(t)&=2x_3\frac{k_1\alpha_2(-b\alpha_2t^2p^{2s}-2a\alpha_1tp^{3s}+\alpha_1bp^{4s})+k_2\alpha_1(a\alpha_2t^2p^{2s}-2b\alpha_2tp^{3s}-a\alpha_1p^{4s})}{(\alpha_1p^{2s}+\alpha_2t^2)^2 }\\
&=2x_3\frac{\alpha_2(k_2\alpha_1a-k_1\alpha_2b)t^2p^{2s}-2\alpha_1\alpha_2(ak_1+bk_2)tp^{3s}-\alpha_1(k_2\alpha_1a-k_1\alpha_2b)p^{4s}}{(\alpha_1p^{2s}+\alpha_2t^2)^2}.
\end{split}
\end{equation*}
If $s>0$ then $t\not\equiv 0$ mod $p$, which implies $\alpha_1p^{2s}+\alpha_2t^2\not\equiv 0 \bmod{p}$. If $s=0$ then also $\alpha_1+\alpha_2t\not\equiv 0\bmod{p}$ because of the fact that $-\alpha_1\alpha_2$ is a quadratic non-residue modulo $p$.

It is easy to see that $\left(k_1,k_2,p^n\right)=\left((k_2\alpha_1a-k_1\alpha_2 b),(a k_1+b k_2),p^n\right)$ and therefore $\mbox{ord}_p(f_{s,k_1,k_2}')=p^{2s+r}$.
We split the second sum over $t$ on the right-hand side of \eqref{newsum} into
\begin{equation*}
\begin{split}
\sum\limits_{\substack{t=1\\t\not\equiv 0~~\text{mod}~~p}}^{p^n} e_{p^n}\left( f_{s,k_1,k_2}(t)\right)=\sum\limits_{\substack{\alpha=1\\ \alpha \not\equiv 0 \bmod{p}}}^p S_{\alpha}\left(f_{s,k_1,k_2};p^n\right).
\end{split}
\end{equation*}
We see that $s>0$ and $p^{-r-2s}f'_{s,k_1,k_2}(t)\equiv 0\bmod{p}$ imply $t\equiv0\bmod{p}$. Then using Proposition \ref{Expsums}, we have $S_{\alpha}=0$ if $\alpha\neq0$, which implies that 
\begin{equation*}
 \sum_{s=1}^n\frac{1}{p^s}\sum\limits_{\substack{t=1\\t\not\equiv 0~~\text{mod}~~p}}^{p^n} e_{p^n}\left( f_{s,k_1,k_2}(t)\right)=0.
\end{equation*}
It follows that
\begin{equation*} \label{2nderrorcont}
U= \frac{N^2}{p^{2n}}\sum\limits_{(x_3,p)=1} \Phi\left(\frac{x_3}{N}\right)\sum\limits_{(k_1,k_2)\in \mathbb{Z}^2\setminus \{(0,0)\}} \hat{\Phi}\left(\frac{k_1N}{p^n}\right)\hat{\Phi}\left(\frac{k_2N}{p^n}\right) \sum\limits_{\substack{t=1}}^{p^n} e_{p^n}\left( f_{0,k_1,k_2}(t)\right).
\end{equation*}
We split the inner-most sum over $t$ into 
\begin{equation*} \label{2split}
\sum\limits_{\substack{t=1}}^{p^n} e_{p^n}\left( f_{0,k_1,k_2}(t)\right)=\sum\limits_{\substack{\alpha=1}}^p S_{\alpha}\left(f_{0,k_1,k_2};p^n\right),
\end{equation*}
where 
\begin{equation*} \label{2Salphadef}
S_{\alpha}\left(f_{0,k_1,k_2};p^n\right)=\sum\limits_{\substack{t \bmod{p^n}\\ t\equiv \alpha\bmod{p}}} e_{p^n}\left(f_{0,k_1,k_2}(t)\right).
\end{equation*}
Set
$$
l_1:=\frac{k_1}{p^r}, \quad l_2:=\frac{k_2}{p^r}.
$$
If $(k_1,p^n)>(k_2,p^n)$ then 
$$p^{-r}f'_{0,k_1,K_2}\equiv 0\bmod{p},$$
which implies $$a\alpha_2t^2-2b\alpha_2t-a\alpha_1\equiv 0 \bmod{p}.$$
The above congruence relation has no solution because the determinant of the corresponding polynomial equals $-4\alpha_2\alpha_3$ and is therefore a quadratic non-residue. So by Proposition \ref{Expsums},
\begin{equation*}
 \sum\limits_{\substack{t=1}}^{p^n} e_{p^n}\left( f_{0,k_1,k_2}(t)\right)=0. 
\end{equation*}
Similarly, this sum is zero if $(k_1,p^n)>(k_2,p^n)$.
Thus it follows that $(l_1l_2,p)=1$ and $(k_1,p^n)=(k_2,p^n)=p^r$.
Using Proposition \ref{Expsums}, we have $S_{\alpha}(f_{0,k_1,k_2})=0$ unless
\begin{equation}\label{2keycong}
 l_2\alpha_1(a\alpha_2\alpha^2-2b\alpha_2\alpha-a\alpha_1)\equiv l_1\alpha_2(b\alpha_2\alpha^2+2a\alpha_1\alpha-\alpha_1b)\bmod{p}.
 \end{equation}
Set $$C(t):=l_2\alpha_1(a\alpha_2t^2-2b\alpha_2t-a\alpha_1)- l_1\alpha_2(b\alpha_2t^2+2a\alpha_1t-\alpha_1b).$$
In summary, we have 
\begin{equation*}
U= \frac{N^2}{p^{2n}}\sum\limits_{(x_3,p)=1} \Phi\left(\frac{x_3}{N}\right)\sum\limits_{r=0}^{n-2} \sum\limits_{\substack{\alpha=1}}^p \sum\limits_{\substack{(l_1,l_2)\in \mathbb{Z}^2\\ (l_1l_2,p)=1\\ C(\alpha)\equiv 0\bmod{p} }} \hat{\Phi}\left(\frac{l_1N}{p^{n-r}}\right)\hat{\Phi}\left(\frac{l_2N}{p^{n-r}}\right) S_{\alpha}\left(f_{0,p^rl_1,p^rl_2};p^n\right)+O_{\varepsilon}(1)
\end{equation*}
if $N\ge p^{n\varepsilon}$.

Let 
$$
D:=-\alpha_3\overline{\alpha_2}(\alpha_1\alpha_2l_1^2+\alpha_1^2l_2^2) \bmod{p^n}.
$$ 
The congruence \eqref{2keycong} has a double root $\alpha \bmod{p}$ iff $D\equiv 0 \bmod{p}$, and in this case we get $\alpha^2\equiv -\alpha_1\overline{\alpha_2}\bmod{p}$ which contradicts the fact that $-\alpha_1\overline{\alpha_2}$ is a quadratic non-residue. Hence, only the case $D\not\equiv 0\bmod{p}$ occurs in which we have no root if $D$ is a quadratic non-residue modulo $p$ and two roots of multiplicity one if $D$ is a quadratic residue modulo $p$. Therefore, we may assume from now on that $D\not\equiv 0 \bmod{p}$ and $D$ is a quadratic residue modulo $p$. Then using Proposition \ref{Expsums}, if $\alpha$ satisfies \eqref{2keycong}, we obtain
$$
S_{\alpha}\left(f_{p^rl_1,p^rl_2},p^n\right)=
\begin{cases}
e_{p^n}\left(f_{0,p^rl_1,p^rl_2}(\alpha^{\ast})\right)\cdot p^{(n+r)/2} & \mbox{ if } n-r \mbox{ is even,}\\
e_{p^n}\left(f_{0,p^rl_1,p^rl_2}(\alpha^{\ast})\right)\cdot \left(\frac{A(\alpha)}{p}\right)\cdot \frac{G_p}{\sqrt{p}}\cdot p^{(n+r)/2} & \mbox{ if } n-r \mbox{ is odd,}
\end{cases}
$$
where $\alpha^{\ast}$ is the unique lifting of $\alpha$ to a root of the congruence
\begin{equation}
 l_2\alpha_1(a\alpha_2(\alpha^{\ast})^2-2b\alpha_2\alpha^{\ast}-a\alpha_1)\equiv l_1\alpha_2(b\alpha_2(\alpha^{\ast})^2+2a\alpha_1\alpha^{\ast}-\alpha_1b)\bmod{p^{n-r}}
 \end{equation}
and 
$$
A(\alpha)=\frac{2 f_{0,p^rl_1,p^rl_2}''(\alpha)}{p^r}.
$$
We calculate that 
$$
\alpha^{\ast}\equiv \frac{\alpha_1(al_1+bl_2)\pm\sqrt{D}}{l_2\alpha_1a-l_1\alpha_2b} \bmod{p^{n-r}},
$$
where $\sqrt{D}$ denotes one of the two roots of the congruence
$$
x^2\equiv D\bmod{p^{n-r}}.
$$
A short calculation gives
$$
e_{p^n}\left(f_{0,p^rl_1,p^rl_2}(\alpha^{\ast})\right)=e_{p^{n-r}}\left(\pm x_3\overline{\alpha_1}\cdot \sqrt{D}\right).
$$
Further, we calculate the second derivative of $f_{p^rl_1,p^rl_2}$ to be
$$ 
f_{0,p^rl_1,p^rl_2}''(t)=4\alpha_1\alpha_2x_3\cdot \frac{\alpha_2D_2t^2+2D_1t-\alpha_1D_2}{(\alpha_1+\alpha_2t^2)^3}\cdot p^{r},
$$
where $D_1=l_1\alpha_1a-l_1\alpha_2b$ and $D_2=l_1a+l_2b$.
Another short calculation gives 
$$
A(\alpha)=\frac{2x_3D_1^2}{\alpha_2\alpha^2\sqrt{D}}
$$
and hence 
$$
\left(\frac{A(\alpha)}{p}\right)=\left(\frac{2x_3\alpha_2\sqrt{D}}{p}\right). 
$$

So altogether, we obtain
\begin{equation*} \label{2Uaftereva}
\begin{split}
U= & \frac{N^2}{p^{3n/2}}\sum\limits_{r=0}^{n-2} p^{r/2} \sum\limits_{\substack{(l_1l_2,p)=1\\ D=\Box\bmod{p}}} \hat{\Phi}\left(\frac{l_1N}{p^{n-r}}\right)\hat{\Phi}\left(\frac{l_2N}{p^{n-r}}\right)\times \\ & \sum\limits_{(x_3,p)=1} \Phi\left(\frac{x_3}{N}\right)\cdot C_{n-r}(x_3,D)\cdot 
\left(e_{p^{n-r}}\left(x_3\overline{\alpha_1}\sqrt{D}\right)+ e_{p^{n-r}}\left(-x_3\overline{\alpha_1}\sqrt{D}\right)\right)+O_{\varepsilon}(1),
\end{split}
\end{equation*}
where $D=\Box\bmod{p}$ means that $D$ is a quadratic residue modulo $p$ and 
$$
C_{n-r}(x_3,D):=\begin{cases} 1 & \mbox{ if } n-r \mbox{ is even,}\\ 
\left(\frac{2x_3\alpha_2\sqrt{D}}{p}\right)\cdot \frac{G_p}{\sqrt{p}} & \mbox { if } n-r
\mbox{ is odd.} \end{cases}
$$
If $(x_3D,p)=1$, then
$$
C_{n-r}(x_3,D)= \frac{G_{p^{n-r}}}{p^{(n-r)/2}}\cdot 
\left(\frac{2x_3\alpha_2\sqrt{D}}{p^{n-r}}\right)
$$
in each of the two cases above. Therefore, $U$ can be more compactly written as 
\begin{equation*} \label{2compact}
\begin{split}
U= & \frac{N^2}{p^{3n/2}}\sum\limits_{r=0}^{n-2} p^{r/2} \cdot \frac{G_{p^{n-r}}}{p^{(n-r)/2}} \cdot \sum\limits_{\substack{D=1\\ D\equiv \Box \bmod{p}}}^{\infty} F_{n-r}(D) \times\\
 & \sum\limits_{x_3\in \mathbb{Z}} \Phi\left(\frac{x_3}{N}\right)\cdot \left(\frac{x_3}{p^{n-r}}\right) \cdot 
\left(e_{p^{n-r}}\left(x_3\overline{\alpha_1}\sqrt{D}\right)+ e_{p^{n-r}}\left(-x_3\overline{\alpha_1}\sqrt{D}\right)\right)+O_{\varepsilon}(1),
\end{split}
\end{equation*}
where 
\begin{equation*} \label{2FD}
F_{n-r}(D):= \left(\frac{2\alpha_2\sqrt{D}}{p^{n-r}}\right)\cdot \sum\limits_{\substack{(l_1l_2,p)=1\\ -\alpha_3\overline{\alpha_2}(\alpha_1\alpha_2l_1^2+\alpha_1^2l_2^2)=D }} \hat{\Phi}\left(\frac{l_1N}{p^{n-r}}\right)\hat{\Phi}\left(\frac{l_2N}{p^{n-r}}\right).
\end{equation*}
This should be compared to \eqref{compact}. Similar calculations as in subsection 3.4. now lead to precisely the same bound \eqref{UboundI}, and the rest of the proof is then the same as in Case I. This completes the proof of Theorem \ref{mainresult} in Case II. 

\section{Proof of Theorem \ref{mainresult2}}
Let $p>5$ be a prime and $q=p^n$.
We denote the quantity in question as 
$$
\Sigma_{\alpha_1,\alpha_2,\alpha_3}(\Phi,N,q)=\sum\limits_{\substack{(x_1,x_2,x_3)\in \mathbb{Z}^3\\ (x_1x_2x_3,p)=1\\ \alpha_1x_1^2+\alpha_2x_2^2+\alpha_3x_3^2 \equiv 0 \bmod{q}}} \Phi\left(\frac{x_1}{N}\right)
\Phi\left(\frac{x_2}{N}\right)\Phi\left(\frac{x_3}{N}\right).
$$ 
In particular, letting $\chi_{[-1,1]}$ be the characteristic function of the interval $[-1,1]$, we have 
$$
\Sigma_{\alpha_1,\alpha_2,\alpha_3}(\chi_{[-1,1]},N,q)=\sum\limits_{\substack{|x_1|,|x_2|,|x_3|\le N\\ \alpha_1x_1^2+\alpha_2x_2^2+\alpha_3x_3^3\equiv 0 \bmod{q}\\ (x_1x_2x_3,q)=1}} 1,
$$
which we denote just by $\Sigma_{\alpha_1,\alpha_2,\alpha_3}(N,q)$ throughout the sequel. 

If $\Phi$ is a Schwartz class function, then Theorem \ref{mainresult} yields, for {\it fixed} $\alpha_1,\alpha_2,\alpha_3$ and $n\rightarrow \infty$, an asymptotic formula for $\Sigma_{\alpha_1,\alpha_2,\alpha_3}(\Phi,N,q)$ if $N\gg q^{1/2+\varepsilon}$. 
Now we allow $\alpha_1,\alpha_2,\alpha_3$ to be {\it arbitrary} (i.e., to vary with $n$). In this situation, we will see that using our approach, it is relatively easy to get an asymptotic formula if $N\gg q^{2/3+\varepsilon}$. We will describe how to reach the exponent $2/3$ and then refine our method to beat it. 

Combining \eqref{divide}, \eqref{T0I}, \eqref{UboundI} in Case I and the corresponding equations and inequalities in Case II, we get an asymptotic formula of the form 
\begin{equation} \label{essential}
\Sigma_{\alpha_1,\alpha_2,\alpha_3}(\Phi,N,q)=\Phi(0)^3\cdot C_p(\alpha_1,\alpha_2,\alpha_3)\cdot \frac{N^3}{q} \cdot (1+o(1))+O\left(\frac{N^3}{q^{3/2}}\sum\limits_{r=0}^{n-2} p^{r/2} \Sigma_{\beta_1,\beta_2,\beta_3}(L_r,q_r) +O_{\varepsilon}(1)\right),
\end{equation}
where 
$$
L_r:=p^{-r}q^{1+\varepsilon}N^{-1}, \quad q_r:=p^{-r-1}q,\quad \beta_1:=\alpha_2\alpha_3, \quad \beta_2:=\alpha_1\alpha_3,\quad \beta_3:=\alpha_1\alpha_2. 
$$
Obviously, $\beta_1,\beta_2,\beta_3$ are also coprime to the modulus. 

Now our task becomes to bound from above the quantity
$\Sigma_{\beta_1,\beta_2,\beta_3}(M,q')$, where $M=L_r$ and $q'=q_r$. We need an upper bound for this quantity in the situation when $M\le (q')^{1/2-\varepsilon}$.
If $\alpha_1,\alpha_2,\alpha_3$ and hence $\beta_1,\beta_2,\beta_3$ are {\it fixed}, then by our method in the previous section, we easily get
\begin{equation} \label{firstbound}
\Sigma_{\beta_1,\beta_2,\beta_3}(M,q')\ll M^{1+\varepsilon}
\end{equation}
in this situation, where the implied $\ll$-constant depends on $\alpha_1,\alpha_2,\alpha_3$. Hence, we then obtain
$$
\Sigma_{\alpha_1,\alpha_2,\alpha_3}(\Phi,N,q)=\hat\Phi(0)^3\cdot C_p(\alpha_1,\alpha_2,\alpha_3)\cdot \frac{N^3}{q}+O\left(\frac{N^2}{q^{1/2-\varepsilon}}\right),
$$  
which gives an asymptotic if $N\gg q^{1/2+\varepsilon}$. We may conjecture that the bound \eqref{firstbound} holds with an {\it absolute} $\ll$-constant, but the dependence on $\beta_1,\beta_2,\beta_2$ (and hence $\alpha_1,\alpha_2,\alpha_3$) seems to be difficult to remove. What we can establish instead relatively easily, with an absolute $O$-constant, is 
 \begin{equation} \label{secondbound}
\Sigma_{\beta_1,\beta_2,\beta_3}(M,q')\ll M^{3/2+\varepsilon},
\end{equation}
again provided that $M\le (q')^{1/2-\varepsilon}$. 
This implies then
$$
\Sigma_{\alpha_1,\alpha_2,\alpha_3}(N,q)=\hat\Phi(0)^3\cdot C_p(\alpha_1,\alpha_2,\alpha_3)\cdot \frac{N^3}{q}+O\left(N^{3/2}q^{\varepsilon}\right),
$$  
which yields an asymptotic for {\it arbitrary} $\alpha_1,\alpha_2,\alpha_3$ if $N\gg q^{2/3+\varepsilon}$. 

There are several ways to establish \eqref{secondbound}. Below we describe an elementary method which has space for improvements. We then refine this method to beat the exponent $3/2$ in \eqref{secondbound}. For easy of notation, we write $q$ in place of $q'$ in the following, bearing in mind that it is not the original modulus $q$. We also assume $M\le q^{1/2-\varepsilon}$ henceforth. Set 
$$
\mathcal{M}:=\left\{-\beta_3x_3^2 : |x_3|\le M,\ (x_3,q)=1\right\}. 
$$  
Then 
$$
\Sigma_{\beta_1,\beta_2,\beta_3}(M,q)=2\sum\limits_{m\in \mathcal{M}} \sum\limits_{\substack{|x_1|,|x_2|\le M\\ \beta_1x_1^2+\beta_2x_2^2\equiv m \bmod{q}\\ (x_1x_2,q)=1}} 1.
$$
Now we apply the Cauchy-Schwarz inequality, getting
\begin{equation*}
\begin{split}
\Sigma_{\beta_1,\beta_2,\beta_3}(M,q)^2\le & |\mathcal{M}| \sum\limits_{m\in \mathcal{M}} \left| \sum\limits_{\substack{|x_1|,|x_2|\le M\\ \beta_1x_1^2+\beta_2x_2^2\equiv m \bmod{q}\\ (x_1x_2,q)=1}} 1\right|^2\\
= & M \sum\limits_{m\in \mathcal{M}} \sum\limits_{\substack{|x_1|,|x_2|,|y_1|,|y_2|\le M\\ \beta_1x_1^2+\beta_2x_2^2\equiv m \bmod{q}\\
\beta_1y_1^2+\beta_2y_2^2\equiv m \bmod{q}\\
 (x_1x_2y_1y_2,q)=1}} 1\\
= & M(\mathcal{D}+\mathcal{E}), 
\end{split}
\end{equation*}
where 
$$
\mathcal{D}:=\sum\limits_{m\in \mathcal{M}} \sum\limits_{\substack{|x_1|,|x_2|\le M\\ \beta_1x_1^2+\beta_2x_2^2\equiv m \bmod{q}\\
|y_1|=|x_1| \ \mbox{\scriptsize and } |y_2|=|x_2| \\
 (x_1x_2,q)=1}} 1 = 4\Sigma_{\beta_1,\beta_2,\beta_3}(M,q)
$$
and 
$$
\mathcal{E}:=\sum\limits_{m\in \mathcal{M}} \sum\limits_{\substack{|x_1|,|x_2|,|y_1|,|y_2|\le M\\ \beta_1x_1^2+\beta_2x_2^2\equiv m \bmod{q}\\
\beta_1y_1^2+\beta_2y_2^2\equiv m \bmod{q}\\|y_1|\not=|x_1| \ \mbox{\scriptsize or } |y_2|\not=|x_2|\\
 (x_1x_2y_1y_2,q)=1}} 1.
$$
We observe that 
\begin{equation*}
\begin{split}
\mathcal{E}\le & \sum\limits_{\substack{|x_1|,|x_2|,|y_1|,|y_2|\le M\\ \beta_1x_1^2+\beta_2x_2^2\equiv \beta_1y_1^2+\beta_2y_2^2 \bmod{q}\\|y_1|\not=|x_1| \ \mbox{\scriptsize or } |y_2|\not=|x_2|}} 1\\
 = & \sum\limits_{\substack{|x_1|,|x_2|,|y_1|,|y_2|\le M\\ \beta_1(x_1-y_1)(x_1+y_1)\equiv \beta_2(y_2-x_2)(y_2+x_2) \bmod{q}\\ (x_1-y_1)(x_1+y_1)\not=0 \ \mbox{\scriptsize or } (y_2-x_2)(y_2+x_2)\not=0}} 1.
\end{split}
\end{equation*}
We further observe that if one of the numbers 
$$
A_1=(x_1-y_1)(x_1+y_1) \quad \mbox{and} \quad A_2=(y_2-x_2)(y_2+x_2)
$$
in the summation condition above equals 0, then the other one equals 0 as well. To see this, note that if $\beta_i A_i\equiv 0 \bmod{q}$, then $p^s|(x_i-y_i)$ and $p^t|(x_i+y_i)$ with $s+t\ge n$, which is not possible if $M\le q^{1/2-\varepsilon}$ unless $x_i-y_i=0$ or $x_i+y_i=0$. 
It follows that
\begin{equation*}
\begin{split}
\mathcal{E}\ll \sum\limits_{\substack{|x_1|,|x_2|,|y_1|,|y_2|\le M\\ \beta_1(x_1-y_1)(x_1+y_1)\equiv \beta_2(y_2-x_2)(y_2+x_2) \bmod{q}\\ (x_1-y_1)(x_1+y_1)\not=0 \ \mbox{\scriptsize and } (y_2-x_2)(y_2+x_2)\not=0}} 1\\ 
 \le \sum\limits_{\substack{0<|A_1|,|A_2|\le 2M^2\\ \beta_1A_1\equiv \beta_2A_2\bmod{q}}} \tau(|A_1|)\tau(|A_2|),
\end{split}
\end{equation*}
where $\tau(k)$ denotes the number of divisors of $k\in \mathbb{N}$. Since we know that $\tau(k)\ll k^{\varepsilon}$, we deduce that
$$
\mathcal{E}\ll M^{\varepsilon}F_{\beta_1,\beta_2}(2M^2,q),
$$
where
$$
F_{\beta_1,\beta_2}(X,q):=\sum\limits_{\substack{0<|A_1|,|A_2|\le X\\ \beta_1A_1\equiv \beta_2A_2\bmod{q}}} 1.
$$

We observe that if $M\le q^{1/2-\varepsilon}$ and $q$ is large enough, then any given $A_2$ in the summation condition above fixes $A_1$, if it exists at all. Hence, we trivially get
\begin{equation} \label{trivial}
F_{\beta_1,\beta_2}(2M^2,q)\le 4M^2. 
\end{equation}
Collecting everything above, we arrive at
$$
\Sigma_{\beta_1,\beta_2,\beta_3}(M,q)^2\ll M\left(\Sigma_{\beta_1,\beta_2,\beta_3}(M,q)
+M^{2+\varepsilon}\right),
$$
implying the claimed bound
$$
\Sigma_{\beta_1,\beta_2,\beta_3}(M,q)\ll M^{3/2+\varepsilon}.
$$
The bound \eqref{trivial} is sharp: If $\beta_1=\beta_2$, then necessarily $A_1=A_2$ and we get exactly
$$
F_{\beta_1,\beta_2}(2M^2,q)= 4M^2. 
$$
However, for {\it generic} $\beta_1$ and $\beta_2$, we should expect a much better bound. Indeed, if $\beta_1\overline{\beta_2}/q$ satisfies certain Diophantine properties, then we can get a saving over the trivial bound. In \cite[equation (63)]{BD}, we established that
$$
F_{\beta_1,\beta_2}(X,q)\ll \left(\frac{X^2}{q}+\frac{rX}{q}+\frac{X}{r}+1\right)(rXq)^{\varepsilon},
$$
provided that
$$
\frac{\beta_1\overline{\beta_2}}{q}=\frac{a}{r}+O(r^{-2})
$$
with $a\in \mathbb{Z}$, $r\in \mathbb{N}$ and $(a,r)=1$. Again combining everything above, we obtain
\begin{equation*}
\begin{split}
\Sigma_{\beta_1,\beta_2,\beta_3}(M,q)^2\ll & M\left(\Sigma_{\beta_1,\beta_2,\beta_3}(M,q)+M^{\varepsilon}F_{\beta_1,\beta_2}(2M^2,q)\right)\\
\ll & M\left(\Sigma_{\beta_1,\beta_2,\beta_3}(M,q)+\left(\frac{M^4}{q}+\frac{rM^2}{q}+\frac{M^2}{r}+1\right)(rMq)^{2\varepsilon}\right),
\end{split}
\end{equation*}
which implies the bound
\begin{equation} \label{method1}
\Sigma_{\beta_1,\beta_2,\beta_3}(M,q)\ll \left(\frac{M^{5/2}}{q^{1/2}}+\frac{r^{1/2}M^{3/2}}{q^{1/2}}+\frac{M^{3/2}}{r^{1/2}}+M\right)(rMq)^{\varepsilon}.
\end{equation}
So if $r$ is not too small or too large, we may get a saving over the trivial bound $\ll M^{3/2+\varepsilon}$. 

The same arguments as above can be applied for $\beta_1\overline{\beta_3}/q$ or $\beta_2\overline{\beta_3}/q$ in place of $\beta_1\overline{\beta_2}/q$. Now for $i=1,2$, Dirichlet's approximation theorem tells us that if $Q\in \mathbb{N}$, then there exists $r_i\le Q$ and $a_i\in \mathbb{Z}$ with $(a_i,r_i)=1$ such that 
\begin{equation}\label{appro}
\left|\frac{\beta_i\overline{\beta_3}}{q}-\frac{a_i}{r_i}\right|\le \frac{1}{r_iQ}\le \frac{1}{r_i^2}.
\end{equation}
Suppose $R\in \mathbb{N}$ is another parameter. Both, $R$ and $Q$ will be fixed later. If one of $r_1$ and $r_2$ exceeds $R$, then we deduce that
\begin{equation} \label{Method1}
\Sigma_{\beta_1,\beta_2,\beta_3}(M,q)\ll \left(\frac{M^{5/2}}{q^{1/2}}+\frac{Q^{1/2}M^{3/2}}{q^{1/2}}+\frac{M^{3/2}}{R^{1/2}}+M\right)(QMq)^{\varepsilon}.
\end{equation}
If both $r_1$ and $r_2$ are less than $R$, then we will use the following different method. We first write our congruence in question as 
$$
x_3^2\equiv -\beta_1\overline{\beta_3}x_1^2-\beta_2\overline{\beta_3}x_2^2\bmod{q}.
$$ 
Multiplying by $r:=r_1r_2$ gives
\begin{equation} \label{newcong}
rx_3^2\equiv -\beta_1\overline{\beta_3}rx_1^2-\beta_2\overline{\beta_3}rx_2^2\bmod{q}.
\end{equation}
By \eqref{appro}, we have
$$
-\beta_1\overline{\beta_3}r=-a_1r_2q + O\left(\frac{qr_2}{Q}\right) 
$$
and 
$$
-\beta_2\overline{\beta_3}r=-a_2r_1q+O\left(\frac{qr_1}{Q}\right).
$$
Now reducing the right-hand side of \eqref{newcong} modulo $q$, we deduce that
\begin{equation} \label{newcong1}
rx_3^2\equiv \gamma_1x_1^2+\gamma_2x_2^2\bmod{q},
\end{equation}
where 
$$
\gamma_1:=-\beta_1\overline{\beta_3}r+a_1r_2q
$$
and 
$$
\gamma_2:=-\beta_2\overline{\beta_3}r+a_2r_1q,
$$
and $\gamma_1$, $\gamma_2$ satisfy the bound
$$
\gamma_1,\gamma_2\ll \frac{q(r_1+r_2)}{Q}\ll \frac{qR}{Q}.
$$
We also have $r\le R^2$.
Now we write \eqref{newcong1} as an equation in the form
\begin{equation} \label{equ}
rx_3^2+kq=\gamma_1x_1^2+\gamma_2x_2^2,
\end{equation}
where 
$$
k=O\left(\frac{R^2M^2}{q}+\frac{RM^2}{Q}\right)
$$
since $|x_1|,|x_2|,|x_3|\le M$. If the left-hand side of \eqref{equ} is fixed, then by Proposition \ref{quadequations} there are $O((RM)^{\varepsilon})$ solutions $(x_2,x_3)$ with $|x_2|,|x_3|\le M$. 
Since we have 
$$
O\left(\frac{R^2M^3}{q}+\frac{RM^3}{Q}+M\right)
$$
choices for the left-hand side of \eqref{equ}, it follows that 
\begin{equation} \label{Method2}
\Sigma_{\beta_1,\beta_2,\beta_3}(M,q)\ll \left(\frac{R^2M^3}{q}+\frac{RM^3}{Q}+M\right)(RM)^{\varepsilon}.
\end{equation}
Now using \eqref{Method1} and \eqref{Method2} in the relevant complementary cases, we obtain
\begin{equation*}
\Sigma_{\beta_1,\beta_2,\beta_3}(M,q)\ll \left(\frac{M^{5/2}}{q^{1/2}}+\frac{Q^{1/2}M^{3/2}}{q^{1/2}}+\frac{M^{3/2}}{R^{1/2}}+\frac{R^2M^3}{q}+\frac{RM^3}{Q}+M\right)(QRMq)^{\varepsilon}.
\end{equation*}
Choosing 
$$
R:=\lceil q^{2/5}M^{-3/5} \rceil \quad \mbox{and} \quad Q:=\lceil q^{3/5}M^{3/5} \rceil 
$$
and recalling $M\le q^{1/2-\varepsilon}$ gives
$$
\Sigma_{\beta_1,\beta_2,\beta_3}(M,q)\ll \left(\frac{M^{5/2}}{q^{1/2}}+\frac{M^{9/5}}{q^{1/5}}+M\right)q^{\varepsilon}.
$$
It follows that
$$
p^{r/2}\Sigma_{\beta_1,\beta_2,\beta_3}\left(L_r,q_r\right)\ll \left(\frac{q^2}{N^{5/2}}+\frac{q^{8/5}}{N^{9/5}}+\frac{q}{N}\right)q^{\varepsilon}
$$
for $r=0,...,n-2$. 
Plugging this into \eqref{essential} gives
$$
\Sigma_{\alpha_1,\alpha_2,\alpha_3}(N,q)=\hat\Phi(0)^3\cdot C_p(\alpha_1,\alpha_2,\alpha_3)\cdot \frac{N^3}{q}+O\left(\left(N^{1/2}q^{1/2}+N^{6/5}q^{1/10}+\frac{N^2}{q^{1/2}}\right)q^{\varepsilon}\right).
$$
The $O$-term is smaller than the main term if 
$$
N\ge q^{11/18+\varepsilon}.
$$
So if this inequality is satisfied, we get an asymptotic. This completes the proof of Theorem \ref{mainresult2}.

\end{document}